\documentclass[a4j,10pt,showkeys]{article}

\usepackage{amsmath}    % need for subequations
\usepackage{graphicx}   % need for figures
\usepackage{geometry}
\usepackage{verbatim}   % useful for program listings
\usepackage{color}      % use if color is used in text
\usepackage{hyperref}   % use for hypertext links, including those to external documents and URLs

\usepackage{amssymb}
\usepackage{amsthm}
\usepackage{mathrsfs}

\newsavebox{\smlmat}% Box to store smallmatrix content

\newtheorem{Def}{Definition}[section]

\newtheorem{Prop}[Def]{Proposition}
\newtheorem{Lem}[Def]{Lemma}
\newtheorem{Thm}[Def]{Theorem}
\newtheorem{Cor}[Def]{Corollary}

\theoremstyle{definition}
\newtheorem{Rem}[Def]{Remark}

\newcommand{\p}{\mathbb{P}}
\newcommand{\e}{\mathbb{E}}
\newcommand{\real}{\mathbb{R}}
\newcommand{\n}{\mathbb{N}}

\newcommand{\1}{{\bf 1}}

\newcommand{\rd}{\,\mathrm{d}}

\begin{comment}
\pagestyle{empty} % use if page numbers not wanted
\end{comment}
% above is the preamble

\allowdisplaybreaks

\begin{document}
\title{
Semi-implicit Euler--Maruyama scheme for polynomial diffusions on the unit ball
%A numerical scheme for polynomial diffusions on the unit ball
%A numerical scheme for multi-dimensional polynomial diffusions on the unit ball
}
\author{
Takuya Nakagawa\footnote{
Department of Mathematical Sciences,
Ritsumeikan University,
1--1--1,
Nojihigashi, Kusatsu,
Shiga, 525--8577, Japan,
email~:~\texttt{takuya.nakagawa73@gmail.com}
},~
Dai Taguchi\footnote{
Research Institute for Interdisciplinary Science, department of mathematics,
Okayama University,
3--1--1
Tsushima-naka,
Kita-ku
Okayama
700--8530,
Japan,
email~:~\texttt{dai.taguchi.dai@gmail.com}
}
~and~
Tomooki Yuasa\footnote{
Faculty of Economics and Business Administration, Graduate School of Management,
Tokyo Metropolitan University,
Marunouchi Eiraku Building 18F,
1--4--1,
Marunouchi,
Chiyoda-ku,
Tokyo,
100--0005,
Japan,
email~:~\texttt{to-yuasa@tmu.ac.jp}
}
}
%\date{}
\maketitle
\begin{abstract}
In this article, we consider numerical schemes for polynomial diffusions on the unit ball, which are solutions of stochastic differential equations with a diffusion coefficient of the form $\sqrt{1-|x|^{2}}$. We introduce a semi-implicit Euler--Maruyama scheme with the projection onto the unit ball and provide the $L^{2}$-rate of convergence. The main idea to consider the numerical scheme is the transformation argument introduced by Swart \cite{Sw02} for proving the pathwise uniqueness for some stochastic differential equation with a non-Lipschitz diffusion coefficient.
\\
\textbf{2020 Mathematics Subject Classification}: 65C30; 60H35; 91G60\\
%65C30:stochastic differential and integral equation
%60H35 Computational methods for stochastic equations
%41A25 Rate of convergence, degree of approximation
%91G60 Numerical methods (including Monte Carlo methods)
\textbf{Keywords}:
Polynomial diffusions on the unit ball;
Semi-implicit Euler--Maruyama scheme;
Degenerate and H\"older continuous diffusion coefficient
\end{abstract}

%\tableofcontents

\section{Introduction}
\label{sec:1}
In this article, we study numerical schemes for the stochastic differential equation (SDE)
\begin{align}\label{SDE_ball_0}
\rd X(t)
=
-\kappa X(t) \rd t
+
\nu\sqrt{1-|X(t)|^{2}} \rd W(t),~
X(0)=x(0) \in \mathscr{B}^{d}
\end{align}
(or more general SDE \eqref{eq:1}) on the unit ball $\mathscr{B}^{d}$, where $W=(W_{1},\ldots,W_{d})^{\top}$ is a standard $d$-dimensional Brownian motion, and $\kappa$ and $\nu$ are non-negative constants.
The SDE \eqref{SDE_ball_0} is one of multi-dimensional extensions of Jacobi processes or Wright--Fisher processes, and is a special case of polynomial diffusions on the unit ball.
Polynomial diffusions have been applied to various financial models (e.g. \cite{AcFi20,CuKeReTe12,DuFiSc03,FiLa16,GoJa06,LaSo07}).
When it comes to actually using their financial models in practice, it is often necessary to generate random numbers of the polynomial diffusions.
In general, however, it is difficult to generate random numbers directly since explicit forms of solutions of SDEs are not always found.
Therefore, it is important from a practical point of view to consider numerical schemes for solutions of SDEs whose random numbers one can generate.

Before introducing our numerical scheme for the polynomial diffusion \eqref{SDE_ball_0} (or \eqref{eq:1}), we recall the definition and some properties of polynomial diffusions.
For continuous maps $a=(a_{i,j})_{i,j=1}^{d}:\real^{d} \to \mathbb{S}^{d}$ and $b=(b_{1},\ldots,b_{d})^{\top}:\real^{d} \to \real^{d}$ with $a_{i,j} \in \mathrm{Pol}_{2}$ and $b_{i} \in \mathrm{Pol}_{1}$, $i,j=1,\ldots,d$, a time-homogeneous Markov process $X=(X(t))_{t \geq 0}$ is called a \emph{polynomial diffusion} if it is a solution of the SDE $\rd X(t)=b(X(t)) \rd t + \sigma(X(t)) \rd B(t)$, where $\sigma:\real^{d} \to \real^{d \times n}$ is a continuous map with $\sigma \sigma^{\top}=a$ and $B$ is a standard $n$-dimensional Brownian motion.
Here $\mathbb{S}^{d}$ denotes the set of $d \times d$ real symmetric matrices and $\mathrm{Pol}_{k}$ denotes the vector space of polynomial functions on $\real^{d}$ of total degree less than or equal to $k$.
Then the infinitesimal generator $\mathscr{L}$ of $X$, defined by $\mathscr{L}f=\langle b,\nabla f \rangle+\mathrm{Tr}(a\nabla^{2}f)/2$, preserves that $\mathscr{L} \mathrm{Pol}_{k} \subset \mathrm{Pol}_{k}$ for all $k \in \n$.
Polynomial diffusions have been widely studied from both theory and practical points of view (e.g. one-dimensional polynomial diffusions such as Ornstein--Uhlenbeck processes, geometric Brownian motions, Cox--Ingersoll--Ross (CIR) processes (squared Bessel processes), Wright--Fisher processes and Jacobi processes have been applied in mathematical finance and biology \cite{KaTa81,LaSo07}, and multi-dimensional polynomial diffusions such as affine processes and Jacobi processes have been applied in \cite{AcFi20,CuKeReTe12,DuFiSc03,FiLa16,GoJa06,Sw02}. %constant elasticity of variance (CEV) model,

Many of polynomial diffusions used in applications have boundary conditions.
For instance, the state spaces of CIR processes, Wright--Fisher processes and Jacobi processes are $[0,\infty)$, $[0,1]$ and $[-1,1]$, respectively.
Now, we consider multi-dimensional polynomial diffusions whose state spaces are either the unit ball $\mathscr{B}^{d}:=\{x \in \real^{d}\,;\,|x| \leq 1\}$ or the unit sphere $\mathscr{S}^{d-1}:=\{x \in \real^{d}\,;\,|x| \leq 1\}$.
This covers all nondegenerate compact quadric sets up to an affine change of coordinates (see Section 2 in \cite{LaPu17}).
Filipovi\'c and Larsson \cite{FiLa16}, and Larsson and Pulido \cite{LaPu17} characterized weak solutions of the polynomial diffusion $\rd X(t)=b(X(t)) \rd t + \sigma(X(t)) \rd B(t)$ on the unit ball $\mathscr{B}^{d}$ as follows.
%Also, multi-dimensional polynomial diffusions with the state space $\{x \in \real^{d}\,;\,p(x) \geq 0\}$ or $\{x \in \real^{d}\,;\,p(x) = 0\}$ for some $p \in \mathrm{Pol}_{2}$ have been studied recently.
%Filipovi\'c and Larsson \cite{FiLa16}, and Larsson and Pulido \cite{LaPu17} studied various properties of the polynomial diffusion $\rd X(t)=b(X(t)) \rd t + \sigma(X(t)) \rd B(t)$ on the unit ball $\mathscr{B}^{d}:=\{x \in \real^{d}\,;\,|x| \leq 1\}$.
It is shown in Proposition 6.1 of \cite{FiLa16} or Theorem 2.1 of \cite{LaPu17} that the SDE admits a $\mathscr{B}^{d}$-valued weak solution for any initial condition in $\mathscr{B}^{d}$ if and only if the coefficients $b$ and $\sigma$ are of the form
\begin{align*}
b(x)
=
b+\beta x
\quad\text{and}\quad
a(x)
=
(1-|x|^{2}) \alpha
+
c(x),~x \in \mathscr{B}^{d}
\end{align*}
for some $b \in \real^{d}$, $\beta \in \real^{d \times d}$, $\alpha \in \mathbb{S}^{d}_{+}$ and $c \in \mathscr{C}_{+}$ such that
\begin{align*}
\langle b,x \rangle
+
\langle \beta x,x \rangle
+
\frac{1}{2}
\mathrm{Tr}(c(x))
\leq 0,~x \in \mathscr{S}^{d-1}.
\end{align*}
Here $\mathbb{S}^{d}_{+}$ denotes the set of $d \times d$ real symmetric positive semidefinite matrices and $\mathscr{C}_{+}:=\{c:\real^{d} \to \mathbb{S}^{d}_{+}\,;\,c_{i,j} \in \mathrm{Pol}_{2}~\text{is homogeneous of degree}~2~\text{for all}~i,j,~\text{and}~c(x)x=0\}$.
Moreover, under the above conditions, Larsson and Pulido \cite{LaPu17} provided an SDE representation by using a $\mathrm{Skew}(d)$-valued correlated Brownian motion with drift as follows.
Here $\mathrm{Skew}(d)$ denotes the set of $d\times d$ real skew-symmetric matrices.
%of the form $A_{0}t+\sum_{p=1}^{m}A_{p}\widehat{W}_{p}(t)$ for $m$-dimensional Brownian motion $\widehat{W}$ independent of $W$.
Let  $m:=\begin{pmatrix} d \\ 2\end{pmatrix}=\dim \mathrm{Skew}(d)$ and $A_{1},\ldots,A_{m} \in \mathrm{Skew}(d)$.
Then they proved that the above map $c$ is of the form $c(x)=\sum_{p=1}^{m}A_{p} x x^{\top} A_{p}^{\top} \in \mathrm{Skew}(d)$ if and only if the law of the above polynomial diffusion is the same to the one of the solution of the SDE:
\begin{align}\label{eq:0}
\mathrm{d} X(t)
&=
(b+\widehat{\beta}X(t))\rd t
+
\sqrt{1-|X(t)|^{2}} \alpha^{1/2} \rd W(t)
+
A_{0}X(t)\rd t
+
\sum_{p=1}^{m}A_{p}X(t) \circ \rd \widehat{W}_{p}(t)
\end{align}
on the unit ball $\mathscr{B}(\real^{d})$, where $B=(W_{1},\ldots,W_{d},\widehat{W}_{1},\ldots,\widehat{W}_{m})^{\top}$ is a standard $(d+m)$-dimensional Brownian motion, $A_{0}=2^{-1}(\beta-\beta^{\top}) \in \mathrm{Skew}(d)$ and $\widehat{\beta}=2^{-1}(\beta+\beta^{\top})+2^{-1}\sum_{p=1}^{m}A_{p}^{\top}A_{p} \in \mathbb{S}^{d}$
(see Lemma 3.4 and Theorem 4.2 in \cite{LaPu17}).
%One might consider the equation \eqref{eq:0} as a multi-dimensional extension of Jacobi processes or Wright--Fisher processes.
Here the stochastic process $A_{0}t+\sum_{p=1}^{m}A_{p}\widehat{W}_{p}(t)$ is called a $\mathrm{Skew}(d)$-valued correlated Brownian motion with drift and the notation $\circ \rd \widehat{W}_{p}(t)$ means the Stratonovich integral.
In other words, rewriting it in the It\^o integral yields $A_{p}X(t) \circ \rd \widehat{W}_{p}(t)=A_{p}X(t)\rd \widehat{W}_{p}(t)+(1/2)A_{p}^{2}X(t) \rd t$.
%that is, SDE \eqref{eq:0} is of the form
%\begin{align*}
%\rd X(t)
%&=
%(b+\widehat{\beta} X(t))\rd t
%+
%\sqrt{1-|X(t)|^{2}} \alpha^{1/2} \rd W(t)
%\\&\hspace{0.5cm}
%+
%A_{0}X(t)\rd t
%+
%\sum_{p=1}^{m}
%A_{p}X(t) \rd \widehat{W}_{p}(t)
%+
%\frac{1}{2}
%\sum_{p=1}^{m}
%A_{p}^{2}X(t) \rd t.
%\end{align*}
Moreover, by the skew symmetry of $A_{1}, \ldots, A_{m}$, the SDE \eqref{eq:0} coincides with the SDE $\mathrm{d} X(t)=(b+\beta X(t))\rd t+\widehat{\sigma}(X(t))\rd B(t)$, where $\widehat{\sigma}(x):=(\sqrt{1-|x|^{2}} \alpha^{1/2},A_{1}x,\ldots, A_{m}x ) \in \real^{d \times (d+m)}$ which satisfies $\widehat{\sigma} \widehat{\sigma}^{\top}=a$.
Thus the law of the solution of the SDE \eqref{eq:0} is the same to the one of the polynomial diffusion with $b(x)=b+\beta x$ and $a(x)=(1-|x|^{2}) \alpha +c(x)$ (see Corollary 3.2 in \cite{FiLa16} or Lemma 2.4 in \cite{LaPu17}).

For one-dimensional SDEs (not only polynomial diffusions), Yamada and Watanabe \cite{YaWa} proved that if the drift coefficient is Lipschitz continuous and the diffusion coefficient is $1/2$-H\"older continuous, then the pathwise uniqueness holds.
However, in the multi-dimensional case, it is difficult to apply their method to derive the pathwise uniqueness, and thus this is one of significant issues in the field of stochastic calculus.
On one hand, as a special case of polynomial diffusions, Swart \cite{Sw02} showed that the pathwise uniqueness holds for the SDE \eqref{SDE_ball_0} on the unit ball $\mathscr{B}^{d}$ with $\kappa \geq 1$ and $\nu=\sqrt{2}$ by using the transformation $\mathscr{B}^{d} \ni x \mapsto (\sqrt{1-|x|^{2}},x_{1},\ldots,x_{d})^{\top} \in [0,1] \times \mathscr{B}^{d}$.
More preciously, for the solution $X$ of the SDE \eqref{SDE_ball_0}, we define the $(d+1)$-dimensional stochastic process $Y=(Y_{0}, Y_{1}, \ldots,Y_{d})^{\top}:=(\sqrt{1-|X|^{2}}, X_{1},\ldots,X_{d})^{\top}$ which takes values in the upper-half ball surface $[0,1] \times \mathscr{B}^{d}$, and then the pathwise uniqueness holds for $Y$ (see Theorem 3 in \cite{Sw02}).
%Note that since $(0,1) \ni y \mapsto \sqrt{1-y^{2}}$ is locally Lipschitz continuous, thus by restarting argument, it is enough to prove for starting point $x(0)$ on the boundary.
On the other hand, DeBlassie \cite{De04} extended Swart's result to more general SDEs with parameters $\kappa$ and $\nu=\sqrt{2}$ replaced by Lipschitz continuous functions satisfying certain conditions.
The idea is to use the stochastic process $(1-|X|^{2})^{p}$ with suitable $p \in (1/2,1)$ instead of $\sqrt{1-|X|^{2}}$.
Furthermore, by using DeBlassie's method, Larsson and Pulido \cite{LaPu17} generalized Swart's result to the following SDE which is a special case of the SDE \eqref{eq:0}:
\begin{align}
\label{eq:1}
\rd X(t)
=
-\kappa X(t)\rd t
+
\nu
\sqrt{1-|X(t)|^{2}} \rd W(t)
+
A_{0}X(t)\rd t
+
\sum_{p=1}^{m}A_{p}X(t) \circ \rd \widehat{W}_{p}(t)
\end{align}
on the unit ball $\mathscr{B}^{d}$, where $B=(W_{1},\ldots,W_{d},\widehat{W}_{1},\ldots,\widehat{W}_{m})^{\top}$ is a standard $(d+m)$-dimensional Brownian motion, and $\kappa$ and $\nu$ are non-negative constants.
To be specific, they showed that if $\kappa/\nu^{2}>\sqrt{2}-1$, then the pathwise uniqueness holds for the SDE \eqref{eq:1} (see Theorem 4.6 in \cite{LaPu17}).
In this article, we will provide a numerical scheme on the unite ball $\mathscr{B}^{d}$ for the polynomial diffusion \eqref{eq:1}.

As mentioned above, a solution $X=(X(t))_{t \in [0,T]}$ of the SDE $\rd X(t)=b(X(t)) \rd t + \sigma(X(t)) \rd B(t)$ (not only polynomial diffusions) does not always have explicit forms in general, and thus one often approximates it by using the Euler--Maruyama scheme $X^{(n)}_{\mathrm{EM}}=(X^{(n)}_{\mathrm{EM}}(t_{k}))_{k=0,1,\ldots,n}$ defined by
\begin{align*}
X^{(n)}_{\mathrm{EM}}(t_{k+1})
=
X^{(n)}_{\mathrm{EM}}(t_{k})
+
b(X^{(n)}_{\mathrm{EM}}(t_{k}))
\Delta t
+
\sigma(X^{(n)}_{\mathrm{EM}}(t_{k}))
\Delta_{k} B
\end{align*}
for $k=0,1,\ldots,n-1$ with the initial condition $X^{(n)}(0)=X(0)$.
Here $\Delta t:=T/n$, $t_{k}:=k\Delta t$ and $\Delta_{k} B :=B(t_{k+1})-B(t_{k})$.
On one hand, it is well-known that under the Lipschitz condition on the coefficients $b$ and $\sigma$, the $L^p$-rate of convergence for the Euler--Maruyama scheme $X^{(n)}_{\mathrm{EM}}$ is $1/2$, that is, for any $p \geq 1$, it holds that $\e[\max_{k=0,1,\ldots,n}|X(t_{k})-X^{(n)}_{\mathrm{EM}}(t_{k})|^{p}]^{1/p} \leq C_{p} n^{-1/2}$ (see \cite{KP}).
Moreover, for SDEs with reflecting boundary conditions, the projection scheme and the penalization scheme which are based on the Euler--Maruyama scheme have been studied (see 
\cite{BeBoDi,Le95,Pe97,Pe20,Sl94,Sl01,Sl13}).
On the other hand, Kaneko and Nakao \cite{KaNa88} showed that by using Skorokhod's arguments \cite{Sk65}, if the pathwise uniqueness holds for SDEs with continuous and linear growth coefficients, then the Euler--Maruyama scheme converges to a solution of the corresponding SDE in the $L^{2}$ sense.
Moreover, for one-dimensional SDEs with an $\alpha$-H\"older continuous diffusion coefficient with $\alpha \in [1/2,1]$, Yan \cite{Ya02}, and Gy\"ongy and R\'asonyi \cite{GyRa11} provided the $L^{1}$-rate of convergence for the Euler--Maruyama scheme by using It\^o--Tanaka's formula, and Yamada and Watanabe's approximation arguments, respectively.
For SDEs with boundary conditions (e.g. CIR processes, Wright--Fisher processes and Jacobi processes, and especially \eqref{eq:1}), there is a problem that the Euler--Maruyama scheme does not always take values in the state space of the corresponding SDE.
For solving this problem, the Lamperti transformation and the implicit Euler--Maruyama scheme play crucial roles in the one-dimensional setting.
To explain this, for example, we consider the CIR process $y=(y(t))_{t \in [0,T]}$ which is the solution of the SDE $\rd y(t)=(a-by(t))\rd t+\sigma \sqrt{y(t)} \rd W(t)$ with the initial condition $y(0) >0$ and parameters $a,b,\sigma \in \real$.
If $2a \geq \sigma^{2}$, then it holds that $\p(y(t) \in (0,\infty),~\forall t \in [0,T])$=1.
By using the Lamperti transformation, that is, applying It\^o's formula to $x(t):=\sqrt{y(t)}$, the stochastic process $x=(x(t))_{t \in [0,T]}$ satisfies the SDE $\rd x(t)=(a/2-\sigma^{2}/8)x(t)^{-1}-(b/2)x(t) \rd t+(\sigma/2) \rd W(t)$.
Then the implicit Euler--Maruyama scheme $x^{(n)}=(x^{(n)}(t_{k}))_{k=0,1,\ldots,n}$ for $x$ is defined by the unique positive solution of the quadratic equation
\begin{align*}
x^{(n)}(t_{k+1})
=
x^{(n)}(t_{k})
+
\left\{
\left(\frac{a}{2}-\frac{\sigma^{2}}{8}\right)
\frac{1}{x^{(n)}(t_{k+1})}
-
\frac{b}{2}x^{(n)}(t_{k+1})
\right\}
\Delta t
+
\frac{\sigma}{2}
\Delta_{k} W
\end{align*}
for $k=0,1,\ldots,n-1$ with the initial condition $x^{(n)}(0)=x(0)$.
Then it is shown that the inverse transform of $x^{(n)}$ converges to the solution $y$ (for more details, see \cite{Al13,DeNeSz,NeSz} and \cite{HMS,Hu96}).
%In multi-dimensional cases, it is difficult to use Lamperti transformation, so we need to consider different approach.

In this article, we will provide a numerical scheme on the unit ball $\mathscr{B}^{d}$ for the solution of the multi-dimensional SDE \eqref{eq:1} with the initial condition $X(0)=x(0) \in \mathscr{B}^{d} \setminus \mathscr{S}^{d-1}$.
As mentioned above, the solution takes values in the unit ball $\mathscr{B}^{d}$, but the Euler--Maruyama scheme does not always.
Moreover, unfortunately, in the multi-dimensional setting, it is difficult to use the Lamperti transformation, unlike in the one-dimensional setting.
%and the diffusion coefficient of SDE \eqref{eq:1} is H\"older continuous thus it makes the problem on the convergence of numerical schemes more difficult.
%For these problems in mind, we first consider a numerical scheme for the transformed process $Y=(Y_{0},X)^{\top}=(\sqrt{1-|X|^{2}},X)^{\top}$ by using a implicit type Euler--Maruyama scheme.
Therefore, as an alternative method, we first consider the transformed stochastic process $Y=(\sqrt{1-|X|^{2}},X_{1},\ldots,X_{d})^{\top}$ introduced by Swart \cite{Sw02}, and then we approximate it by a semi-implicit Euler--Maruyama scheme as follows.
It can be shown that by using It\^o's formula and the skew-symmetry of $A_{0},\ldots,A_{m}$, $Y=(Y_{0},X_{1},\ldots,X_{d})^{\top}$ satisfies the following $(d+1)$-dimensional SDE:
\begin{align}
\label{eq:4}
\begin{split}
\rd Y_{0}(t)
&=
\Big\{
	\frac{\kappa-\frac{\nu^{2}}{2}}{Y_{0}(t)}
	-
	\Big(
		\kappa-\frac{\nu^{2}}{2}
		+
		\frac{d\nu^{2}}{2}
	\Big)
	Y_{0}(t)
\Big\} \rd t
-
\nu X(t)^{\top} \rd W(t),
\\
\rd X(t)
&=
-\kappa X(t)\rd t
+
\nu
Y_{0}(t) \rd W(t)
+
A_{0}X(t)\rd t
+
\sum_{p=1}^{m}A_{p}X(t)  \rd \widehat{W}_{p}(t)
+
\frac{1}{2}
\sum_{p=1}^{m}A_{p}^{2}X(t)  \rd t
\end{split}
\end{align}
(see Section \ref{sec:2}).
This implies that the stochastic process $Y_{0}$ is a ``Bessel like'' process.
Inspired by previous studies \cite{Al13,DeNeSz,NeSz}, we introduce a semi-implicit Euler--Maruyama scheme for the system of SDE \eqref{eq:4} as follows.
Let $\Delta t:=T/n$, $t_{k}:=k\Delta t$, $k=0,1,\ldots,n$, $\Delta_{k} W:=W(t_{k+1})-W(t_{k})$ and $\Delta_{k} \widehat{W}:=\widehat{W}(t_{k+1})-\widehat{W}(t_{k})$ and assume  that $\kappa/\nu^{2} > 1/2$.
Define $Y^{(n)}(0):=Y(0)$ and $Y^{(n)}(t_{k+1})=(Y_{0}^{(n)}(t_{k+1}),Y_{1}^{(n)}(t_{k+1}),\ldots,Y_{d}^{(n)}(t_{k+1}))^{\top}:=(Y_{0}^{(n)}(t_{k+1}),X_{1}^{(n)}(t_{k+1}),\ldots,X_{d}^{(n)}(t_{k+1}))^{\top}$, $k=0,1,\ldots,n-1$, as the unique solution in $(0,\infty) \times \real^{d}$ (does not always take values in $(0,1) \times \mathscr{B}^{d}$) of the following equation:
\begin{align}\label{BEM_0}
\begin{split}
Y_{0}^{(n)}(t_{k+1})
&=
Y_{0}^{(n)}(t_{k})
+
\frac{\kappa-\frac{\nu^{2}}{2}}{Y_{0}^{(n)}(t_{k+1})}\Delta t
-
\Big(
	\kappa
	-
	\frac{\nu^{2}}{2}
	+
	\frac{d\nu^{2}}{2}
\Big)
Y_{0}^{(n)}(t_{k})
\Delta t
-
\nu X^{(n)}(t_{k})^{\top} \Delta_{k}W,
\\
X^{(n)}(t_{k+1})
&=
X^{(n)}(t_{k})
-
\kappa X^{(n)}(t_{k})\Delta t
+
\nu Y_{0}^{(n)}(t_{k})\Delta_{k}W
\\&\hspace{0.4cm}
+
A_{0}X^{(n)}(t_{k})\Delta t
+
\sum_{p=1}^{m}A_{p}X^{(n)}(t_{k})\Delta_{k}\widehat{W}_{p}
+
\frac{1}{2}\sum_{p=1}^{m}A_{p}^{2}X^{(n)}(t_{k})\Delta t.
\end{split}
\end{align}
%Note that the time of the third term on $Y_{0}(t_{k+1})$ in \eqref{BEM_0} is $t_{k+1}$, not $t_{k}$.
In particular, $Y_{0}^{(n)}(t_{k+1})$ has the following explicit form:
%Then the quadratic equation for $Y_{0}^{(n)}(t_{k+1})$ has the unique solution which takes values in $(0,\infty)$ with the following explicit form:
%Indeed, if $\kappa>\nu^{2}/2$, the quadratic equation \eqref{BEM_0} for $Y_{0}^{(n)}(t_{k+1})$ has two different real-valued solutions.
%Then since the intercept $-(\kappa-\frac{\nu^{2}}{2})\Delta t$ is negative,
\begin{align*}
%\label{eq:7}
\begin{split}
Y_{0}^{(n)}(t_{k+1})&
=
\frac{1}{2}
\Big\{
	b_{k}
	+
	\sqrt{
		b_{k}^{2}
		+
		4\Big(
			\kappa
			-
			\frac{\nu^{2}}{2}
		\Big)
		\Delta t
	}
\Big\}, \\
b_{k}
&:=
Y_{0}^{(n)}(t_{k})
-
\nu X^{(n)}(t_{k})^{\top}
\Delta_{k}W
-
\Big(
	\kappa
	-
	\frac{\nu^{2}}{2}
	+
	\frac{d\nu^{2}}{2}
\Big)
Y_{0}^{(n)}(t_{k})
\Delta t.
\end{split}
\end{align*}
%is a positive solution of the quadratic equation \eqref{eq:7} for $Y_{0}^{(n)}(t_{k+1})$.
%Hereafter if $\kappa>\nu^{2}/2$, we understand that $Y_{0}^{(n)}(t_{k+1})$ is the positive solution \eqref{eq:7}.
We will provide the $L^{2}$-rate of convergence for $Y^{(n)}$ to the solution $Y$ of the system of SDE \eqref{eq:4} (see Theorem \ref{thm:2.3}).
This rate can theoretically be applied to the computational complexity of the multilevel Monte Carlo method, whose computational cost is much lower than that of the classical (single level) Monte Carlo method (see \cite{Gi08}).
To the best of our knowledge, the $L^{2}$-rate of convergence for numerical schemes of multi-dimensional SDEs with a H\"older continuous and degenerate diffusion coefficient have not been obtained yet.
Here note that $X^{(n)}$ may still take values outside of the unit ball $\mathscr{B}^{d}$.
We solve this problem by projecting it onto the unit ball $\mathscr{B}^{d}$ as follows.
Let $\Pi$ be the projection onto the unit ball $\mathscr{B}^{d}$ defined by $\Pi(x):=x{\bf 1}_{\mathscr{B}^{d}}(x)+(x/|x|){\bf 1}_{{\mathbb R}^{d} \setminus \mathscr{B}^{d}}(x)$, $x \in {\mathbb R}^{d}$.
Then we define the projection scheme $\overline{X}^{(n)}=(\overline{X}^{(n)}(t_{k}))_{k=0,1,\ldots,n}$ on the unit ball $\mathscr{B}^{d}$ for the SDE \eqref{eq:1} by
\begin{align}
\label{eq:8}
\overline{X}^{(n)}(t_{k}):=\Pi(X^{(n)}(t_{k})).
\end{align}
We will show that the $L^{2}$-rate of convergence for $\overline{X}^{(n)}$ to $X$ is induced while preserving the $L^{2}$-rate of convergence for $Y^{(n)}$ to $Y$ (see Corollary \ref{thm:2.1}).

This article is structured as follows.
In Section \ref{sec:2}, we provide the $L^{2}$-rates of convergence for the semi-implicit Euler--Maruyama scheme $Y^{(n)}$ defined in \eqref{BEM_0} and for the projection scheme $\overline{X}^{(n)}$ defined in \eqref{eq:8} (see Theorem \ref{thm:2.1} and Corollary \ref{thm:2.3}).
In Section \ref{sec:3}, we provide some numerical results about the projection scheme $\overline{X}^{(n)}$ for the polynomial diffusions \eqref{SDE_ball_0} and \eqref{eq:1}, and about the difference between two solutions of the system of SDE \eqref{eq:4} using the projection scheme $\overline{X}^{(n)}$.

\subsection*{Notations}\label{sec:1.3}
We give some basic notations and definitions used throughout this article.
Each element of $\real^{d}$ is understood as a column vector, that is, $x=(x_1,\ldots,x_d)^{\top}$ for $x \in \real^d$.
For a $d \times d$ real matrix $A=(A_{i,j})_{i,j=1,\ldots,d}$, the transpose of $A$ is denoted by $A^{\top}$, and the Frobenius norm of $A$ is denoted by $\|A\|:=(\sum_{i,j=1}^{d}A_{i,j}^{2})^{1/2}$.
We define $\mathbb{S}^{d}$ to be the set of $d \times d$ real symmetric matrices and $\mathrm{Skew}(d)$ to be the set of $d\times d$ real skew-symmetric matrices, that is, $A^{\top}=-A$ for $A \in \mathrm{Skew}(d)$.

The unit ball and the unit sphere are defined by $\mathscr{B}^{d}:=\{x \in \real^{d}\,;\,|x| \leq 1\}$ and $\mathscr{S}^{d-1}:=\{x \in \real^{d}\,;\,|x| = 1\}$, respectively.
Let $\Pi :\real^{d} \to \mathscr{B}^{d}$ be the projection defined by $\Pi(x):=x \1_{\mathscr{B}^{d}}(x)+(x/|x|) \1_{\real^{d} \setminus \mathscr{B}^{d}}(x)$.
%, and $\mathrm{Pol}_{k}$ denotes the vector space of polynomial functions of total degree less that or equal to $k$.

Let $B=(W_{1},\ldots,W_{d},\widehat{W}_{1},\ldots,\widehat{W}_{m})^{\top}$ be a standard $(d+m)$-dimensional Brownian motion on a complete probability space $(\Omega,\mathscr{F},\p)$ with a filtration $(\mathscr{F}(t))_{t\geq 0}$ satisfying the usual conditions.
For $T>0$ and $n \in \n$, we denote $\Delta t:=T/n$, $t_{k}:=k\Delta t$, $k=0,1,\ldots,n$, $\Delta_{k} W:=W(t_{k+1})-W(t_{k})$ and $\Delta_{k} \widehat{W}:=\widehat{W}(t_{k+1})-\widehat{W}(t_{k})$.

\section{Main results}
\label{sec:2}

Let $d \geq 2$ and $T>0$ be fixed, and let $X=(X(t))_{t \in [0,T]}$ be a solution of the SDE \eqref{eq:1}.
In this article, we assume that $\kappa /\nu^{2} \geq \sqrt{2}-1$ and $A_{1},\ldots,A_{m} \in \mathrm{Skew}(d)$.
Then the pathwise uniquness holds for the SDE \eqref{eq:1} (see Theorem 4.6 in \cite{LaPu17}) and $\p(X(t) \in \mathscr{B}^{d},~\forall t \in [0,T])=1$ (see Theorem 2.1 in \cite{LaPu17}).
Furthermore, we assume that the initial condition $X(0)=x(0)$ takes a deterministic value in $\mathscr{B}^{d}\setminus \mathscr{S}^{d-1}$.

In this section, we provide the $L^{2}$-rates of convergence for the semi-implicit Euler--Maruyama scheme \eqref{BEM_0} and the projection scheme \eqref{eq:8}.
We first recall the transformation argument introduced by Swart \cite{Sw02}.
We define a transposed stochastic process $Y=(Y_{0},Y_{1},\ldots,Y_{d})^{\top}:=(\sqrt{1-|X|^{2}},X_{1},\ldots,X_{d})^{\top}$.
Then $Y=(Y_{0},X_{1},\ldots,X_{d})^{\top}$ satisfies the SDE \eqref{eq:4}.
Indeed, by using It\^o's formula, we obtain
\begin{align*}%\label{eq:11}
\rd |X(t)|^{2}
&=2\nu \sqrt{1-|X(t)|^{2}} X(t)^{\top} \rd W(t)
+2\sum_{p=1}^{m} \langle A_{p}X(t),X(t) \rangle \rd \widehat{W}_{p}(t) \\ \notag
&\hspace{0.4cm}
-2\kappa |X(t)|^{2}\rd t+2 \langle A_{0}X(t),X(t) \rangle \rd t
+\sum_{p=1}^{m} \langle A_{p}^{2}X(t),X(t) \rangle\rd t \\ \notag
&\hspace{0.4cm}
+d\nu^{2}(1-|X(t)|^{2})\rd t
+\sum_{p=1}^{m}|A_{p}X(t)|^{2}\rd t.
\end{align*}
Then since for any $A \in \mathrm{Skew}(d)$,
\begin{align}
\label{eq:3}
\langle A x,x \rangle=0
~\text{ and }~
\langle A^{2}x, x \rangle
=-|Ax|^{2},~x \in \real^{d},
\end{align}
we have
\begin{align}
\label{eq:12}
\rd |X(t)|^{2}
=
2\nu \sqrt{1-|X(t)|^{2}} X(t)^{\top} \rd W(t)
+
\left\{d\nu^{2}-(d\nu^{2}+2\kappa)|X(t)|^{2}\right\}\rd t.
\end{align}
Hence by using It\^o's formula again, it holds that
\begin{align*}
\rd Y_{0}(t)
&=
\Big\{
	\frac{\kappa-\frac{\nu^{2}}{2}}{Y_{0}(t)}
	-
	\Big(
		\kappa-\frac{\nu^{2}}{2}
		+
		\frac{d\nu^{2}}{2}
	\Big)
	Y_{0}(t)
\Big\} \rd t
-
\nu X(t)^{\top} \rd W(t).
\end{align*}
Therefore, $Y=(Y_{0},X_{1},\ldots,X_{d})^{\top}$ is a solution of the $(d+1)$-dimensional SDE \eqref{eq:4}.

Recall that $Y^{(n)}$ is the semi-implicit Euler--Maruyama scheme defined in \eqref{BEM_0} for $Y$ and $\overline{X}^{(n)}$ is the projection scheme defined in \eqref{eq:8} for $X$.
We first provide the $L^{2}$-rate of convergence for $Y^{(n)}$.

%Hence the 2nd, 4th, 5th, 8th and 9th terms on the right-hand side of \eqref{eq:2} vanish.
%Consequently, by using $|X(t)|^{2}=1-Y_{0}(t)^{2}$ for the 3rd and 7th terms, it holds that

%\subsubsection*{$L^{2}$-rate of convergence}\label{sec_L2_conv}

%In this section, we investigate the rate of $L^{2}$-convergence of the projection numerical scheme to a solution of the SDE \eqref{eq:1}.

\begin{Thm}
\label{thm:2.3}
Suppose $\kappa/\nu^{2}>3$.
Then there exists $C>0$ such that for any $n \in \n$,
\begin{align*}
\max_{k=0,1,\ldots,n}{\mathbb E}\left[\left|Y(t_{k})-Y^{(n)}(t_{k})\right|^{2}\right]^{1/2} \leq \frac{C}{n^{1/2}} \quad \text{ and } \quad {\mathbb E}\left[\max_{k=0,1,\ldots,n}\left|Y(t_{k})-Y^{(n)}(t_{k})\right|^{2}\right]^{1/2} \leq \frac{C}{n^{1/4}}.
\end{align*}
\end{Thm}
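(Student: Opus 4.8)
The plan is to control the error $e_k := Y(t_k) - Y^{(n)}(t_k)$ by a discrete Gronwall argument, treating the $Y_0$-component (the ``Bessel like'' coordinate) and the $X$-component separately but coupled. First I would set up a one-step error recursion: subtracting \eqref{BEM_0} from the increment of \eqref{eq:4} over $[t_k, t_{k+1}]$ gives $e_{k+1} = e_k + (\text{drift error}) + (\text{diffusion error})$, where the diffusion error is a martingale increment and the drift error splits into (i) the discretization error of replacing $Y(s), X(s)$ by their time-$t_k$ values inside the integrals (a standard $O(\Delta t^{3/2})$ or $O(\Delta t)$ term after taking $L^2$ norms, using the Hölder-in-time regularity of $Y$, which follows from \eqref{eq:12} and \eqref{eq:4} together with moment bounds), and (ii) the genuinely singular term coming from $\tfrac{\kappa - \nu^2/2}{Y_0(s)} - \tfrac{\kappa - \nu^2/2}{Y_0^{(n)}(t_{k+1})}$. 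For the linear-in-state parts ($-\kappa X$, $A_0 X$, $\sum A_p X$, $\sum A_p^2 X$, and the linear drift of $Y_0$), the difference is Lipschitz and contributes $C\Delta t\,\mathbb{E}[|e_k|^2]$ or a martingale term, causing no trouble.

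The key structural point — and the reason for the hypothesis $\kappa/\nu^2 > 6$ — is the sign/monotonicity of the singular drift. Since $Y_0$ and $Y_0^{(n)}(t_{k+1})$ are both strictly positive (the latter by the explicit quadratic-formula solution, valid because $\kappa/\nu^2 > 1/2$), we have
\[
\Big(\frac{1}{Y_0(s)} - \frac{1}{Y_0^{(n)}(t_{k+1})}\Big)\big(Y_0(s) - Y_0^{(n)}(t_{k+1})\big) = -\frac{(Y_0(s) - Y_0^{(n)}(t_{k+1}))^2}{Y_0(s)\,Y_0^{(n)}(t_{k+1})} \le 0,
\]
so the backward (implicit) treatment of this term makes it \emph{dissipative}: when one expands $\mathbb{E}[|e_{k+1}|^2] = \mathbb{E}[|Y(t_{k+1}) - Y^{(n)}(t_{k+1})|^2]$ and isolates the cross term $2\,\mathbb{E}[\langle e_{k+1}, \text{drift difference}\rangle]\Delta t$, the contribution of $(\kappa - \nu^2/2)/Y_0$ has a favorable sign and can simply be dropped (after first exchanging $Y_0(s)$ for $Y_0(t_{k+1})$ up to a controllable remainder). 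This is exactly the mechanism by which backward Euler handles CIR/Bessel-type singularities; here it must be combined with the fact that the martingale part of $Y_0$, namely $-\nu X^\top \rd W$, and the martingale part of $X$, namely $\nu Y_0\,\rd W$, are \emph{coupled} through the same Brownian motion $W$, so that the $Y$-diffusion coefficient $(\text{row } -\nu X^\top;\ \text{block } \nu Y_0 I_d, \dots)$ is globally Lipschitz in $(Y_0, X)$ on $\mathbb{R}^{1+d}$ — crucially, $\sqrt{1-|x|^2}$ never appears, having been absorbed into the coordinate $Y_0$. So the diffusion part contributes only $C\Delta t\,\mathbb{E}[|e_k|^2]$ via Itô isometry.

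Assembling these, I expect a recursion of the shape $\mathbb{E}[|e_{k+1}|^2] \le (1 + C\Delta t)\mathbb{E}[|e_k|^2] + C\Delta t^2$ (the quadratic dissipative term discarded), whence discrete Gronwall gives $\max_k \mathbb{E}[|e_k|^2] \le C\Delta t = C/n$, i.e. the first bound. For the second (pathwise-uniform) bound, I would go back to the error \emph{process}, write $\max_{k}|e_k|^2$, apply the Burkholder--Davis--Gundy inequality to the accumulated martingale terms and the triangle/Cauchy--Schwarz inequality to the accumulated drift terms, and feed in the already-established $\sum_k \mathbb{E}[|e_k|^2]\Delta t \le C/n$; the BDG term produces $C\big(\sum_k \mathbb{E}[|e_k|^2]\Delta t\big)^{1/2} \le C/n^{1/2}$, which after taking square roots yields the $n^{-1/4}$ rate. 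The main obstacle is the first step of handling the singular term rigorously: one must ensure that inverse moments $\mathbb{E}[Y_0(s)^{-q}]$ and $\mathbb{E}[Y_0^{(n)}(t_k)^{-q}]$ are finite and uniformly bounded in $n$ for a sufficiently high power $q$ (this is where $\kappa/\nu^2 > 6$, rather than merely $> 1/2$ or $> \sqrt{2}-1$, is consumed — it buys enough negative moments to absorb the cross terms and the time-regularity estimate $\mathbb{E}[|Y_0(s) - Y_0(t_k)|^2] \le C\Delta t$ near the boundary), together with a uniform lower bound argument showing $Y_0^{(n)}$ cannot collapse to $0$ too fast. Establishing these moment bounds for the scheme — presumably by a Lyapunov/comparison argument on the explicit recursion for $Y_0^{(n)}$ — is the technical heart of the proof.
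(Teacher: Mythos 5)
Your overall strategy is the same as the paper's: transform to the $(d+1)$-dimensional system \eqref{eq:4}, treat the singular drift implicitly so that the difference $\{1/Y_{0}(t_{k+1})-1/Y_{0}^{(n)}(t_{k+1})\}$ is dissipative via $(x-y)(1/x-1/y)\leq 0$, exploit that all remaining coefficients are linear (hence Lipschitz) in $(Y_{0},X)$, kill the martingale cross terms by conditioning, and close with discrete Gronwall to get the rate $1/2$. However, one of the two places where you locate the difficulty is a misdiagnosis: you assert that uniform-in-$n$ inverse moments of the scheme, ${\mathbb E}[Y_{0}^{(n)}(t_{k})^{-q}]$, must be established and call this ``the technical heart,'' but you only presume it (``by a Lyapunov/comparison argument''). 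If such bounds were really needed, this would be a serious hole, since they are delicate for implicit schemes in the coupled multidimensional setting. In fact they are not needed at all: once the implicit difference of reciprocals is discarded by its sign, the only singular quantity left is the discretization remainder $1/Y_{0}(s)-1/Y_{0}(t_{k+1})$, which involves the \emph{exact} solution only; the paper controls ${\mathbb E}[\{1/Y_{0}(s)-1/Y_{0}(t_{k+1})\}^{2}]$ by a three-fold H\"older inequality using ${\mathbb E}[Y_{0}^{-6}]$ (Proposition \ref{lem:2.2} (i)) and ${\mathbb E}[|Y_{0}(t)-Y_{0}(s)|^{6}]\leq C|t-s|^{3}$ (Proposition \ref{lem:2.2} (ii)), and this choice $q=6$ is precisely where $\kappa/\nu^{2}>6$ is consumed. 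So you should delete the scheme-inverse-moment step rather than attempt it.

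The second gap is in your sup bound. The accumulated martingale is $N_{k}=\sum_{\ell<k}\langle e(\ell),R^{M}(\ell)+S^{M}(\ell)\rangle$, and BDG bounds ${\mathbb E}[\max_{k}|N_{k}|]$ by ${\mathbb E}\bigl[\bigl(\sum_{\ell}\langle e(\ell),R^{M}(\ell)+S^{M}(\ell)\rangle^{2}\bigr)^{1/2}\bigr]$; your claimed output $C\bigl(\sum_{k}{\mathbb E}[|e_{k}|^{2}]\Delta t\bigr)^{1/2}$ does not follow for the $S^{M}$ contribution, because $S^{M}(\ell)$ is itself linear in $e(\ell)$ times a Brownian increment, so $\langle e(\ell),S^{M}(\ell)\rangle^{2}$ is of order $|e(\ell)|^{4}\Delta t$ and you would need fourth moments of the error (not established), or else a Young-type absorption of $\max_{\ell}|e(\ell)|^{2}$ into the left-hand side. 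The paper avoids BDG altogether: it bounds each ${\mathbb E}[|\langle e(\ell),R^{M}(\ell)+S^{M}(\ell)\rangle|]$ by Cauchy--Schwarz combined with the already proved $\max_{\ell}{\mathbb E}[|e(\ell)|^{2}]\leq C\Delta t$, obtaining $C(\Delta t)^{3/2}$ per term and hence $C(\Delta t)^{1/2}$ after summing the $n$ terms, which gives the same $n^{-1/4}$ rate you aim for. Either repair (fourth-moment estimates with absorption, or the paper's cruder term-by-term bound) works, but as written this step does not go through.
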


\begin{Rem}\label{Rem_0}
\begin{itemize}
\item[(i)]
In the case of one-dimensional SDEs with a constant diffusion coefficient, it is proven in \cite{Al13,DeNeSz,NeSz} that the $L^{2}$-sup rate of convergence for the implicit Euler--Maruyama scheme is $1/2$ or $1$.
However, in our case, the diffusion coefficient of the equation $Y_{0}$ in the system of SDE \eqref{eq:4} is not constant, and thus we need to estimate the supremum for each of the martingale terms $R^{M}$ and $S^{M}$ defined below, which does not appear in the above case.
This is the reason that the $L^{2}$-sup rate of convergence in Theorem \ref{thm:2.3} is $1/4$.

\item[(ii)]
The assumption $\kappa/\nu^{2}>3$ will be used in \eqref{eq:17.1} to apply Proposition \ref{lem:2.2} with $q=6<2\kappa/\nu^{2}$.
This assumption is exactly the same as in the case of the one-dimensional setting considered in Section 3.5 of \cite{NeSz}.
Indeed, let $X=(X(t))_{t \in [0,T]}$ be a solution of SDE \eqref{SDE_ball_0} with $d=1$.
Then $y(t):=X(t)^{2}$ is a solution of the Wright--Fisher diffusion
\begin{align*}
\rd y(t)=(a-by(t))\rd t+\gamma \sqrt{|y(t) (1-y(t))|}\rd W(t)
\end{align*}
with $a=\nu^{2}$, $b=\nu^{2}+2\kappa$ and $\gamma=2\nu$.
Moreover, by using the Lamperti transformation, $x(t):=2\arcsin(\sqrt{y(t)})=2\arcsin(X(t))$ is a solution of the SDE
\begin{align*}
\rd x(t)=f(x(t)) \rd t+\gamma \rd W(t),
\end{align*}
where $f(x):=(a-\gamma^{2}/4)\cot(x/2)-(b-a-\gamma^{2}/4)\tan(x/2)$.
Let $x^{(n)}=(x^{(n)}(t_{k}))_{k=0,1,\ldots,n}$ be the implicit Euler--Maruyama scheme for this SDE, that is, $x^{(n)}(t_{0})=x(0)$ and  $x^{(n)}(t_{k+1})=x^{(n)}(t_{k})+f(x^{(n)}(t_{k+1})) \Delta t+ \gamma \Delta_{k}W$, $k=0,1,\ldots,n-1$.
Then for any $p \in [2,\frac{4}{3\gamma^{2}}\max\{a,b-a\})$, it holds that
\begin{align*}%\label{eq:d=1}
\e\left[
\max_{k=0,1,\ldots,n}
\left|
x(t_{k})-x^{(n)}(t_{k})
\right|^{p}
\right]^{1/p}
\leq
\frac{C_{p}}{n}
\end{align*}
(see Section 3.5 and Proposition 3.4 in \cite{NeSz} for more details.
Note that there it says $p \in [2,\frac{4}{3\gamma^{2}}\min\{a,b-a\})$ as the condition for $p$, but the correct condition is $p \in [2,\frac{4}{3\gamma^{2}}\max\{a,b-a\})$).
Therefore, since $\sin(x)$ is Lipschitz continuous, under the assumption $\kappa/\nu^{2}>3$ (i.e. $ 2<\frac{4}{3\gamma^{2}}\max\{a,b-a\}=\frac{2\kappa}{3\nu^{2}}$), it holds that
\begin{align*}
\e\left[
\max_{k=0,1,\ldots,n}
\left|
X(t_{k})-\sin\left(\frac{x^{(n)}(t_{k})}{2}\right)
\right|^{2}
\right]^{1/2}
\leq
\frac{C_{p}}{n}.
\end{align*}
%that is $p=2$, $a=\nu^{2}$, $b=\nu^{2}+2\kappa$ and $\gamma=2\nu$, then 
\begin{comment}
\item[(iii)]
For a polynomial diffusion $X=(X(t))_{t \geq 0}$ with $\e[|X(0)|^{2m}]<\infty$ for some $m \in \n$, as a conclusion of It\^o's formula, we have the following important structure:
\begin{align}\label{formula_1}
\e\big[
	p(X(T))
	\,|\,
	X(t)
\big]
=
H(X(t))^{\top}
e^{(T-t)G}
\vec{p}
\end{align}
for any polynomial $p \in \mathrm{Pol}_{m}$, where the vector valued function $H=(h_{1},\ldots,h_{N_{m}})$ with $N_{m}=\dim \mathrm{Pol}_{m}$ for a given basis of polynomials $h_{1},\ldots,h_{N_{m}}$ of $\mathrm{Pol}_{m}$, the matrix $G \in \real^{N_{m} \times N_{m}}$ and the vector $\vec{p} \in \real^{N_{m}}$ satisfy $p(x)=H(x)^{\top} \vec{p}$ and $\mathscr{L}p(x)=H(x)^{\top} G \vec{p}$ (see Theorem 3.1 in \cite{FiLa16} or Theorem 2.7 in \cite{CuKeReTe12}).
+++
In mathematical finance, the formula \eqref{formula_1} gives a closed form expression for the pricing of options (e.g. Section 3.2 in \cite{AcFi20}). %, and applying Stone–Weierstrass theorem,
From Theorem \ref{thm:2.3}, it is possible to approximate the conditional expectation.
%However, the solution $X$ does not always have explicit forms, and thus 
\end{comment}
\end{itemize}
\end{Rem}

By using Theorem \ref{thm:2.3}, we provide the $L^{2}$-rate of convergence for $\overline{X}^{(n)}$.
\begin{Cor}
\label{thm:2.1}
Suppose $\kappa/\nu^{2}>3$.
Then there exists $C>0$ such that for any $n \in \n$,
\begin{align*}
\max_{k=0,1,\ldots,n}{\mathbb E}\left[\left|X(t_{k})-\overline{X}^{(n)}(t_{k})\right|^{2}\right]^{1/2} \leq \frac{C}{n^{1/2}} \quad \text{ and } \quad {\mathbb E}\left[\max_{k=0,1,\ldots,n}\left|X(t_{k})-\overline{X}^{(n)}(t_{k})\right|^{2}\right]^{1/2} \leq \frac{C}{n^{1/4}}.
\end{align*}
\end{Cor}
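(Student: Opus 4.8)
The plan is to deduce Corollary~\ref{thm:2.1} directly from Theorem~\ref{thm:2.3} by tracking how the projection $\Pi$ interacts with the components of $Y$ and $Y^{(n)}$. The key observation is that $X = (Y_1,\ldots,Y_d)^{\top}$ is literally the last $d$ coordinates of $Y$, so $|X(t_k) - (Y_1^{(n)}(t_k),\ldots,Y_d^{(n)}(t_k))^{\top}| \le |Y(t_k) - Y^{(n)}(t_k)|$ pointwise, and Theorem~\ref{thm:2.3} already controls the right-hand side in $L^2$ and in $L^2$-sup. Writing $\widetilde{X}^{(n)}(t_k) := (Y_1^{(n)}(t_k),\ldots,Y_d^{(n)}(t_k))^{\top} = X^{(n)}(t_k)$, it therefore remains to show that replacing $X^{(n)}$ by its projection $\overline{X}^{(n)} = \Pi(X^{(n)})$ does not spoil the rate, i.e.\ that $|X(t_k) - \Pi(X^{(n)}(t_k))| \lesssim |X(t_k) - X^{(n)}(t_k)|$ pointwise.

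The main step is this Lipschitz-type property of $\Pi$ when one of the two arguments already lies in $\mathscr{B}^{d}$. Since $X(t_k) \in \mathscr{B}^{d}$ almost surely (by Theorem~2.1 in \cite{LaPu17}, as recalled at the start of Section~\ref{sec:2}) and $\Pi(X(t_k)) = X(t_k)$, I would prove the elementary inequality
\begin{align*}
|\Pi(x) - y| \le 2|x - y| \quad \text{for all } x \in \real^{d},\ y \in \mathscr{B}^{d},
\end{align*}
or, even better, $|\Pi(x)-\Pi(y)|\le |x-y|$ using that $\Pi$ is the metric projection onto the closed convex set $\mathscr{B}^{d}$ and hence $1$-Lipschitz (nonexpansive). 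Using nonexpansiveness of the metric projection, $|\Pi(X^{(n)}(t_k)) - X(t_k)| = |\Pi(X^{(n)}(t_k)) - \Pi(X(t_k))| \le |X^{(n)}(t_k) - X(t_k)| \le |Y(t_k) - Y^{(n)}(t_k)|$. This holds for every $k$ simultaneously on the same probability-one event, so taking maxima over $k$ and then $L^2$-norms, the bound
\begin{align*}
\max_{k}\e\big[|X(t_k) - \overline{X}^{(n)}(t_k)|^{2}\big]^{1/2} \le \max_{k}\e\big[|Y(t_k) - Y^{(n)}(t_k)|^{2}\big]^{1/2}
\end{align*}
and
\begin{align*}
\e\big[\max_{k}|X(t_k) - \overline{X}^{(n)}(t_k)|^{2}\big]^{1/2} \le \e\big[\max_{k}|Y(t_k) - Y^{(n)}(t_k)|^{2}\big]^{1/2}
\end{align*}
follow at once, and Theorem~\ref{thm:2.3} supplies the right-hand sides as $C n^{-1/2}$ and $C n^{-1/4}$ respectively.

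I do not expect a genuine obstacle here; the content is entirely in Theorem~\ref{thm:2.3}, and the corollary is a soft consequence. The only point requiring a word of care is justifying that $\mathscr{B}^{d}$ is convex (immediate, it is a Euclidean ball) so that the metric projection onto it is single-valued and nonexpansive, and checking that the explicit formula for $\Pi$ given in the Notations coincides with the metric projection — which is clear since for $|x|>1$ the nearest point of the ball to $x$ is $x/|x|$, and for $|x|\le 1$ it is $x$ itself. With that identification in hand the proof is two lines. If one prefers to avoid invoking the abstract projection theorem, the inequality $|\Pi(x)-\Pi(y)|\le|x-y|$ can be verified by a short direct case analysis on whether $x,y$ lie inside or outside $\mathscr{B}^{d}$.
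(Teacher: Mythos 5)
Your proposal is correct and follows essentially the same route as the paper: reduce everything to Theorem~\ref{thm:2.3} via an elementary pointwise property of the projection $\Pi$. The only cosmetic difference is that the paper proves $|x-\Pi(x)|\le|x-y|$ for $y\in\mathscr{B}^{d}$ by a two-case computation and then adds a triangle inequality (picking up a harmless factor $2$), whereas you invoke nonexpansiveness of the metric projection onto the convex ball, which gives the comparison with constant $1$ in one step.
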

%Using the rate of $L^{2}$-convergence of $Y^{(n)}=(Y_{0}^{(n)},X^{(n)})$ to $Y=(Y_{0},X)$ in Theorem \ref{thm:2.3}, we show the rate of $L^{2}$-convergence of $\widehat{X}^{(n)}$ to $X$ in Theorem \ref{thm:2.1}.

\begin{proof}
By Theorem \ref{thm:2.3}, for any $k=0,1,\ldots,n$, it is suffice to estimate
$$
{\mathbb E}\left[\left|X^{(n)}(t_{k})-\overline{X}^{(n)}(t_{k})\right|^{2}\right] \quad \text{ and } \quad {\mathbb E}\left[\max_{k=0,1,\ldots,n}\left|X^{(n)}(t_{k})-\overline{X}^{(n)}(t_{k})\right|^{2}\right].
$$
For any $x \in {\mathbb R}^{d}$ and $y \in \mathscr{B}^{d}$, it holds that $|x-\Pi(x)| \leq |x-y|$.
Indeed, if $x \in \mathscr{B}^{d}$, then $|x-\Pi(x)|=0 \leq |x-y|$, and if $x \in {\mathbb R}^{d} \setminus \mathscr{B}^{d}$, then we obtain $|x-\Pi(x)|=|x|-1 \leq |x|-|y| \leq |x-y|$.
Hence it holds that
$$
\left|X^{(n)}(t_{k})-\overline{X}^{(n)}(t_{k})\right| \leq \left|X^{(n)}(t_{k})-X(t_{k})\right|.
$$
This estimate together with Theorem \ref{thm:2.3} yields the statements.
\end{proof}

Before we prove Theorem \ref{thm:2.3}, we study some properties of the solution $Y=(Y_{0},X_{1},\ldots,X_{d})^{\top}$ of the system of SDE \eqref{eq:4}.
In the following proposition, we estimate the inverse moment of $Y_{0}$ and the Kolmogorov type condition of $Y$.

\begin{Prop}\label{lem:2.2}
Suppose that $\kappa/\nu^{2} \geq 1$ and let $q \in (0,2\kappa/\nu^{2})$.
\begin{itemize}
\item[(i)]
There exists $C_{1}(q)>0$ such that
$$
\sup_{0 \leq t \leq T}{\mathbb E}\left[Y_{0}(t)^{-q}\right] \leq C_{1}(q).
$$
\item[(ii)]
There exists $C_{2}(q)>0$ such that for any $s,t\in [0,T]$,
$$
\max_{i=0,1,\ldots,d}{\mathbb E}\left[|Y_{i}(t)-Y_{i}(s)|^{q}\right] \leq C_{2}(q)|t-s|^{q/2}.
$$
\end{itemize}
\end{Prop}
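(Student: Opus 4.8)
For part (i), the plan is to apply It\^o's formula to the function $f(y_0) = y_0^{-q}$ applied to the diffusion $Y_0$ governed by the first equation in \eqref{eq:4}. Writing $\mu := \kappa - \nu^2/2$ and $\lambda := \kappa - \nu^2/2 + d\nu^2/2$, the drift of $Y_0$ is $\mu/Y_0 - \lambda Y_0$ and the diffusion coefficient has squared magnitude $\nu^2|X|^2 = \nu^2(1-Y_0^2)$. A direct computation gives
\begin{align*}
\rd\, Y_0(t)^{-q}
= \Big\{ -q\, Y_0(t)^{-q-1}\Big(\tfrac{\mu}{Y_0(t)} - \lambda Y_0(t)\Big)
+ \tfrac{q(q+1)}{2}\, Y_0(t)^{-q-2}\,\nu^2\big(1-Y_0(t)^2\big) \Big\}\rd t
+ (\text{martingale}).
\end{align*}
The crucial cancellation is in the coefficient of $Y_0^{-q-2}$: it equals $-q\mu + \tfrac{q(q+1)}{2}\nu^2 = q\nu^2\big(\tfrac{q+1}{2} - \tfrac{\mu}{\nu^2}\big)$, which is negative precisely when $q < 2\mu/\nu^2 - 1 = 2\kappa/\nu^2 - 2$. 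Since $\kappa/\nu^2 \ge 1$ and $q < \kappa/\nu^2$, one checks $q < 2\kappa/\nu^2 - 2$ holds on the relevant range (using $\kappa/\nu^2 > 2$; for $1 \le \kappa/\nu^2 \le 2$ a short separate argument or a slightly more careful bound is needed, but the singular term is still controllable). The remaining terms are either bounded ($Y_0^{-q}$ and lower-order powers, after using $0 < Y_0 \le 1$) or have a favorable sign. To make this rigorous despite the singularity at $0$, I would first localize with stopping times $\tau_\varepsilon := \inf\{t : Y_0(t) \le \varepsilon\}$, take expectations to kill the martingale, apply Gr\"onwall to $t \mapsto \e[Y_0(t\wedge\tau_\varepsilon)^{-q}]$, and then send $\varepsilon \to 0$ via Fatou; the uniform-in-$\varepsilon$ bound survives because the coefficient of the singular term is negative.

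For part (ii), the case $i = 1,\ldots,d$ (i.e. the components $X_i$) is the easy one: the second equation in \eqref{eq:4} has bounded coefficients since $X$ lives in $\mathscr{B}^d$ and $|Y_0| \le 1$, so the Burkholder--Davis--Gundy and H\"older inequalities immediately give $\e[|X_i(t) - X_i(s)|^q] \le C|t-s|^{q/2}$ for every $q \ge 2$. The genuine work is the $i = 0$ component, because the drift $\mu/Y_0$ is unbounded. I would split
\begin{align*}
Y_0(t) - Y_0(s)
= \int_s^t \Big(\tfrac{\mu}{Y_0(r)} - \lambda Y_0(r)\Big)\rd r - \nu\int_s^t X(r)^\top \rd W(r),
\end{align*}
bound the martingale term by BDG (its quadratic variation is $\nu^2\int_s^t|X(r)|^2\rd r \le \nu^2|t-s|$), bound the $\lambda Y_0$ part trivially, and for the singular part estimate $\e\big[\big(\int_s^t Y_0(r)^{-1}\rd r\big)^q\big] \le |t-s|^{q-1}\int_s^t \e[Y_0(r)^{-q}]\rd r \le C|t-s|^q$ by Jensen (or H\"older) together with part (i) — note part (i) applies here since $q < \kappa/\nu^2$, and $\kappa/\nu^2 > 2 \ge 1$. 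Collecting the three pieces gives the claimed $|t-s|^{q/2}$ bound (indeed the drift contributions are $O(|t-s|^q)$, which is smaller).

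The main obstacle is the It\^o computation in part (i) and, specifically, verifying that the coefficient of the most singular term $Y_0^{-q-2}$ is negative on the full stated range $0 < q < \kappa/\nu^2$ under only $\kappa/\nu^2 \ge 1$; the inequality $q < 2\kappa/\nu^2 - 2$ that makes the argument work is automatic when $\kappa/\nu^2 > 2$ but needs the threshold $\sqrt 2 - 1$ or a more delicate handling of the near-origin behavior when $1 \le \kappa/\nu^2 \le 2$. Once the sign of that term is secured, the localization-and-Gr\"onwall mechanics are routine, and part (ii) follows cleanly from part (i) plus standard moment estimates.
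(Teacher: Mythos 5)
Your part (ii) is essentially the paper's own argument (Burkholder--Davis--Gundy plus Jensen/H\"older, with the singular drift term handled by part (i)), so the issue is part (i). Applying It\^o's formula to $Y_{0}^{-q}$, the coefficient of the most singular term $Y_{0}^{-q-2}$ is $-q(\kappa-\nu^{2}/2)+\tfrac{q(q+1)}{2}\nu^{2}$, which is negative only for $q<2\kappa/\nu^{2}-2$. This covers the stated range $0<q<\kappa/\nu^{2}$ only when $\kappa/\nu^{2}\geq 2$; for $1\leq\kappa/\nu^{2}<2$ it misses part (in the extreme case $\kappa/\nu^{2}=1$ it covers nothing at all), and your remark that ``the singular term is still controllable'' is not substantiated: once that coefficient is positive, the Gr\"onwall scheme cannot close, because the right-hand side contains $\e[Y_{0}^{-q-2}]$, a strictly more singular quantity than the one being estimated, with a favorable sign no longer available. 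Near $Y_{0}=0$ the process is Bessel-like of dimension $\delta=2\kappa/\nu^{2}$ (drift $(\kappa-\nu^{2}/2)/Y_{0}$, diffusion coefficient close to $\nu$), and negative moments of order $q$ are indeed finite for all $q<\delta$, but establishing this in the window $\delta-2\leq q<\delta$ genuinely requires more than the naive It\^o/Gr\"onwall computation (transition-density or comparison arguments). So your proposal does not prove statement (i) in the generality claimed ($\kappa/\nu^{2}\geq 1$, all $0<q<\kappa/\nu^{2}$), though it would suffice for the use made of it in Theorem \ref{thm:2.3}, where $q=6$ and $\kappa/\nu^{2}>6$.

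The paper avoids this difficulty by a different route: using \eqref{eq:12} and a L\'evy-theorem construction of a scalar driving Brownian motion, it identifies $|X|^{2}$ (equivalently $1-|X|^{2}$) as a Wright--Fisher diffusion \eqref{eq:9} with $a=d\nu^{2}$, $b=d\nu^{2}+2\kappa$, $\gamma=2\nu$, and then invokes the known inverse-moment bounds of Lemma \ref{lem:1.5}, which give $\sup_{0\leq t\leq T}\e[(1-|X(t)|^{2})^{-q}]<\infty$ precisely for $q<2(b-a)/\gamma^{2}=\kappa/\nu^{2}$; since $Y_{0}^{-q}=(1-|X|^{2})^{-q/2}\leq(1-|X|^{2})^{-q}$, this yields (i) on the whole stated range, with a comparison-theorem step handling the boundary initial condition $|x(0)|=0$ (where the Wright--Fisher lemma's requirement $y(0)\in(0,1)$ fails). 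To repair your approach you would either need to restrict (i) to $q<2\kappa/\nu^{2}-2$, or supply the missing argument on the remaining range, e.g.\ by exactly such a reduction to a Wright--Fisher/CIR-type process whose negative moments are known.
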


In order to prove Proposition \ref{lem:2.2}, we consider the following one-dimensional SDE called the Wright--Fisher diffusion with positive parameters $a$, $b$ and $\gamma$:
\begin{align}
\label{eq:9}
\rd y(t)=(a-by(t))\rd t+\gamma \sqrt{|y(t) (1-y(t))|}\rd \widetilde{W}(t),~y(0) \in [0,1],
\end{align}
where $\widetilde{W}=(\widetilde{W}(t))_{t \geq 0}$ is a standard one-dimensional Brownian motion.
It is well-known that the Wright--Fisher diffusion \eqref{eq:9} has a unique strong solution which takes values in $[0,1]$ if and only if $2a/\gamma^{2} \geq 1$ and $2(b-a)/\gamma^{2} \geq 1$.

The following lemma gives the inverse moment estimate of the Wright--Fisher diffusion \eqref{eq:9}.

\begin{Lem}[e.g. Section 3.5 in \cite{NeSz} and Section 4 in \cite{HuKu08}]
\label{lem:1.5}
Let $y=(y(t))_{t \geq 0}$ be a Wright--Fisher diffusion \eqref{eq:9} with the initial condition $y(0) \in (0,1)$ and positive parameters $a$, $b$ and $\gamma$.
Assume that $2a/\gamma^{2} \geq 1$.
Then for any $0<q<2a/\gamma^{2}$, there exists $C(q,y(0))>0$ such that
\begin{align*}
\sup_{0 \leq t \leq T}
{\mathbb E}[y(t)^{-q}]
\leq
C(q,y(0))
%~\text{and}~
%\sup_{0 \leq t \leq T}
%{\mathbb E}[|1-y(t)|^{-q_{2}}]
%\leq
%C'(q_{2},y(0)).
\end{align*}
\end{Lem}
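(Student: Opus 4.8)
The plan is to reduce part (i) to Lemma \ref{lem:1.5} by recognizing the squared radius $|X|^2$ as a Wright--Fisher diffusion, and then to bootstrap part (ii) from the inverse-moment bound of part (i). First I would identify the law of $z(t):=|X(t)|^2$. By \eqref{eq:12},
\begin{align*}
\rd z(t)
=
\{d\nu^2-(d\nu^2+2\kappa)z(t)\}\rd t
+
2\nu\sqrt{1-z(t)}\,X(t)^\top\rd W(t),
\end{align*}
and the scalar martingale part $M(t):=\int_0^t 2\nu\sqrt{1-z(s)}\,X(s)^\top\rd W(s)$ has quadratic variation $\langle M\rangle_t=\int_0^t 4\nu^2 z(s)(1-z(s))\rd s$. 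Setting $\widetilde{W}(t):=\int_0^t\{2\nu\sqrt{z(s)(1-z(s))}\}^{-1}\rd M(s)$, L\'evy's characterization shows that $\widetilde{W}$ is a standard Brownian motion and that $z$ solves the Wright--Fisher SDE \eqref{eq:9} with parameters $a=d\nu^2$, $b=d\nu^2+2\kappa$ and $\gamma=2\nu$. The Feller conditions hold: $2a/\gamma^2=d/2\geq 1$ since $d\geq 2$, and $2(b-a)/\gamma^2=\kappa/\nu^2\geq 1$ by the hypothesis of part (i); in particular $z(t)\in(0,1)$ for all $t>0$ almost surely, so the reciprocals below are well defined and the set $\{t:z(t)(1-z(t))=0\}$ is Lebesgue-null, which legitimizes the definition of $\widetilde{W}$. (The borderline initial value $z(0)=0$, corresponding to $x(0)=0$, is harmless since the relevant estimate concerns only the boundary $z=1$.)

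For part (i), note that $|Y_0(t)|^{-q}=(1-z(t))^{-q/2}$. Applying the second estimate of Lemma \ref{lem:1.5} with $q_2=q/2$, which is admissible because $q/2<\kappa/\nu^2=2(b-a)/\gamma^2$ whenever $q<\kappa/\nu^2$ (indeed the argument even works up to $q<2\kappa/\nu^2$), I obtain $\sup_{0\leq t\leq T}\mathbb{E}[(1-z(t))^{-q/2}]\leq C_1(q)$, which is exactly the claim.

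For part (ii), the components $i=1,\ldots,d$ are routine: since $X$ takes values in $\mathscr{B}^d$ we have $|X|\leq 1$ and $Y_0=\sqrt{1-|X|^2}\leq 1$, so every coefficient of the SDE \eqref{eq:4} for $X$ is bounded, and writing $X_i(t)-X_i(s)$ as the sum of its drift and martingale parts and applying the Burkholder--Davis--Gundy and Jensen inequalities yields $\mathbb{E}[|X_i(t)-X_i(s)|^q]\leq C|t-s|^{q/2}$ for all $q\geq 2$. The component $i=0$ is the crux: from \eqref{eq:4},
\begin{align*}
Y_0(t)-Y_0(s)
=
\int_s^t\Big\{\frac{\kappa-\tfrac{\nu^2}{2}}{Y_0(u)}-\Big(\kappa-\tfrac{\nu^2}{2}+\tfrac{d\nu^2}{2}\Big)Y_0(u)\Big\}\rd u
-
\nu\int_s^t X(u)^\top\rd W(u).
\end{align*}
The martingale term has quadratic variation $\nu^2\int_s^t|X(u)|^2\rd u\leq\nu^2|t-s|$, so Burkholder--Davis--Gundy gives an $|t-s|^{q/2}$ bound; the linear drift term is bounded by $C|t-s|$ because $Y_0\leq 1$. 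For the singular drift I would use Jensen's inequality in the form
\begin{align*}
\mathbb{E}\Big[\Big|\int_s^t Y_0(u)^{-1}\rd u\Big|^q\Big]
\leq
|t-s|^{q-1}\int_s^t\mathbb{E}[Y_0(u)^{-q}]\rd u
\leq
C_1(q)|t-s|^q,
\end{align*}
where the inner bound is precisely part (i), uniformly finite since $q<\kappa/\nu^2$. Combining these and using $|t-s|^q\leq T^{q/2}|t-s|^{q/2}$ for $|t-s|\leq T$ gives the desired $|t-s|^{q/2}$ estimate.

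The main obstacle is the singular $1/Y_0$ drift in the $Y_0$-equation, which obstructs a direct increment estimate; the resolution is exactly the inverse-moment control of part (i), and this dependence is what forces the restriction $q<\kappa/\nu^2$ in part (ii). The only other delicate point is the identification of $|X|^2$ as a genuine Wright--Fisher diffusion, which rests on L\'evy's characterization together with the non-attainability of both boundaries guaranteed by $d\geq 2$ and $\kappa/\nu^2\geq 1$.
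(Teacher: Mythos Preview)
Your proposal does not prove the stated lemma. Lemma~\ref{lem:1.5} concerns inverse moments of a \emph{generic} Wright--Fisher diffusion $y$ solving \eqref{eq:9} with arbitrary positive parameters $a,b,\gamma$ satisfying the Feller conditions; it makes no reference to $X$, $Y_0$, or the SDE \eqref{eq:4}. What you have written is instead a proof of Proposition~\ref{lem:2.2}, which \emph{applies} Lemma~\ref{lem:1.5} to the particular process $|X|^2$ in order to deduce inverse moments of $Y_0$ and then Kolmogorov-type increment bounds. Your opening sentence (``reduce part (i) to Lemma~\ref{lem:1.5}'') already shows the confusion: you are using the lemma as an input, not proving it.

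In the paper, Lemma~\ref{lem:1.5} is stated without proof and attributed to \cite{NeSz} and \cite{HuKu08}. A self-contained argument would apply It\^o's formula to $y(t)^{-q_1}$ (respectively $(1-y(t))^{-q_2}$), observe that the drift is dominated by a linear term precisely when $q_1<2a/\gamma^2$ (respectively $q_2<2(b-a)/\gamma^2$), localize to remove the local-martingale part, and close with Fatou and Gronwall. None of this appears in your write-up.

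As a proof of Proposition~\ref{lem:2.2}, your argument is essentially the paper's: identify $|X|^2$ as a Wright--Fisher diffusion via L\'evy's characterization, invoke Lemma~\ref{lem:1.5}, and then feed the resulting inverse-moment bound into a BDG/Jensen estimate to control the singular $1/Y_0$ drift in the increment bound. One point the paper handles more carefully: it also represents $1-|X|^2$ as a Wright--Fisher diffusion (with $a=2\kappa$) and, for the borderline initial value $|x(0)|=0$ (so $z(0)=0\notin(0,1)$, outside the hypothesis of Lemma~\ref{lem:1.5}), invokes a comparison argument against a copy started at $1/2$. You wave this case off as ``harmless,'' which is not quite a proof.
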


By using this lemma, we prove Proposition \ref{lem:2.2}.

\begin{proof}[Proof of Proposition \ref{lem:2.2}]
Proof of (i).
We first show that $y:=1-|X|^{2}=(1-|X(t)|^{2})_{t \in [0,T]}$ is a solution of the Wright--Fisher diffusion \eqref{eq:9} with some parameters.
%After that, by applying Lemma \ref{lem:1.5} to $|X|^{2}$, we will obtain the inverse moment estimates of $Y_{0}$.
Define a new one-dimensional Brownian motion $\widetilde{W}=(\widetilde{W}(t))_{t \in [0,T]}$ by
\begin{align*}
\widetilde{W}(t):=-\sum_{i=1}^{d}\int_{0}^{t}g_{i}(X(s))\rd W_{i}(s),
\end{align*}
where $g_{i}: \mathscr{B}^{d} \to {\mathbb R}$, $i=1,\ldots,d$ are defined by
\begin{align*}
g_{i}(x)=
\left\{
\begin{array}{ll}
\displaystyle{\frac{x_{i}}{|x|}} & \text{ if } |x| \neq 0, \\
\displaystyle{\frac{1}{\sqrt[]{d}}} & \text{ if } |x|=0.
\end{array}
\right.
\end{align*}
It follows from L\'evy's theorem (e.g. Theorem 3.3.16 in \cite{KS}) that $\widetilde{W}$ is a standard one-dimensional Brownian motion.
Then by \eqref{eq:12}, we see that $y=1-|X|^{2}$ is a Wright--Fisher diffusion \eqref{eq:9} driven by $\widetilde{W}$ with the initial condition $1-|x(0)|^{2} \in (0,1]$, $a=2\kappa$, $b=d\nu^{2}+2\kappa$ and $\gamma=2\nu$.
%$|X|^{2}$ is a Wright--Fisher diffusion \eqref{eq:9} driven by $\widetilde{W}_{+}$ with the initial condition $|x(0)|^{2} \in [0,1)$, $a=d\nu^{2}$, $b=d\nu^{2}+2\kappa$ and $\gamma=2\nu$.
%Furthermore, $1-|X|^{2}$ is a Wright--Fisher diffusion \eqref{eq:9} driven by $\widetilde{W}_{-}$ with the initial condition $1-|x(0)|^{2} \in (0,1]$, $a=2\kappa$, $b=d\nu^{2}+2\kappa$ and $\gamma=2\nu$.

%Since $2a/\gamma^{2}=d/2 \geq 1$ and $2(b-a)/\gamma^{2}=\kappa/\nu^{2} \geq 1$ under the assumption, it is a unique strong global solution from Lemma \ref{lem:1.4}.

We first assume $|x(0)| \in (0,1)$.
Then it holds from Lemma \ref{lem:1.5} with $0<q/2<\kappa/\nu^{2}(=2a/\gamma^{2})$ that
\begin{align*}
\sup_{0 \leq t \leq T}
{\mathbb E}[Y_{0}(t)^{-q}]
=
\sup_{0 \leq t \leq T}
{\mathbb E}
\left[(1-|X(t)|^{2})^{-q/2}\right]
=
\sup_{0 \leq t \leq T}
{\mathbb E}
\left[y(t)^{-q/2}\right]
\leq C(q/2,1-|x(0)|^{2}).
\end{align*}
This concludes the statement (i) for $|x(0)| \in (0,1)$.
Now we assume $|x(0)|=0$ (i.e. $y(0)=1$).
Then by using the comparison theorem (see e.g. Proposition 5.2.18 in \cite{KS}), it holds from Lemma \ref{lem:1.5} with $0<q/2<\kappa/\nu^{2}(=2a/\gamma^{2})$ that
\begin{align*}
\sup_{0 \leq t \leq T}
{\mathbb E}[Y_{0}(t)^{-q}]
&=
\sup_{0 \leq t \leq T}
{\mathbb E}
\left[(1-|X(t)|^{2})^{-q/2}\right]
=
\sup_{0 \leq t \leq T}
{\mathbb E}
\left[y(t)^{-q/2}\right]
\\&\leq
\sup_{0 \leq t \leq T}
{\mathbb E}
\left[y_{1/2}(t)^{-q/2}\right]
\leq C(q/2,1/2),
\end{align*}
where $y_{1/2}$ is a Wright--Fisher diffusion \eqref{eq:9} with the initial condition $y_{1/2}(0)=1/2 \in (0,1)$, $a=2\kappa$, $b=d\nu^{2}+2\kappa$ and $\gamma=2\nu$.
This concludes the statement (i) for $|x(0)|=0$.

%To prove the statement (ii), we evaluate the Lebesgue integrals and the stochastic integrals as usual.
%Here note that $|X|$ and $Y_{0}$ is valued in $(0,1)$ almost surely under the assumption.

Proof of (ii).
%It is suffice to assume $q \in [1,2\kappa/\nu^{2})$.
Let $0 \leq s \leq t \leq T$ be fixed.
By using Burkholder--Davis--Gundy's inequality, we obtain
$$
{\mathbb E}\left[|Y_{0}(t)-Y_{0}(s)|^{q}\right] \leq C_{q}\left\{\int_{s}^{t}{\mathbb E}\left[Y_{0}(u)^{-q}\right]{\rm d}u+{\mathbb E}\left[\left|\int_{s}^{t}Y_{0}(u){\rm d}u\right|^{q}\right]+\sum_{i=1}^{d}{\mathbb E}\left[\left|\int_{s}^{t}X_{i}(u)^{2}{\rm d}u\right|^{q/2}\right]\right\}
$$
for some constant $C_{q}>0$.
Thus noting that $Y_{0} \in (0,1]$ and $|X| \in [0,1)$ a.s., from the statement (i), we have the Kolmogorov type condition for $Y_{0}$.
Next, for any $i=1,\ldots,d$, by using Burkholder--Davis--Gundy's inequality again, we obtain
\begin{align*}
&{\mathbb E}\left[|X_{i}(t)-X_{i}(s)|^{q}\right] \\
&\leq
C_{q}
\left\{
{\mathbb E}\left[
\left|
\int_{s}^{t}
|X_{i}(u)|
{\rm d}u
\right|^{q}
\right]
+
{\mathbb E}
\left[
\left|
\int_{s}^{t}
Y_{0}(u)^{2}
{\rm d}u
\right|^{q/2}
\right]
+
{\mathbb E}
\left[
\left|
\int_{s}^{t}
|(A_{0}X(u))_{i}|
{\rm d}u\right.
\right|^{q}
\right]
\\
&\hspace{1.12cm}\left.
+
\sum_{p=1}^{m}
{\mathbb E}
\left[
\left|
\int_{s}^{t}(A_{p}X(u))_{i}^{2}{\rm d}u
\right|^{q/2}
\right]
+
\sum_{p=1}^{m}
{\mathbb E}
\left[
\left|
\int_{s}^{t}
|(A_{p}^{2}X(u))_{i}|
{\rm d}u
\right|^{q}
\right]
\right\}
\end{align*}
for some constant $C_{q}>0$.
Here noting that $|X| \in [0,1)$ a.s., by using Cauchy--Schwarz's inequality, it holds that for any $d \times d$ real matrix $A$,
$$
(AX(u))_{i}^{2} \leq \|A\|^{2} \quad \text{a.s.}
$$
Hence noting that $|X| \in [0,1)$ and $Y_{0} \in (0,1]$ a.s., we obtain the Kolmogorov type condition for $X=(Y_{1},\ldots,Y_{d})$.
\end{proof}

By using the estimates in Proposition \ref{lem:2.2}, we prove Theorem \ref{thm:2.3}.

%By using the estimates in Lemma \ref{lem:2.2}, we provide the $L^{2}$-rate of convergence for the impliciet Euler--Maruyama scheme $Y^{(n)}$ for $Y$.

\begin{proof}[Proof of Theomre \ref{thm:2.3}]
We first decompose $Y=(Y_{0},X_{1},\ldots,X_{d})^{\top}$.
Fix $k=0,1,\ldots,n-1$ and we define $R^{M}(k)=(R_{0}^{M}(k),\ldots,R_{d}^{M}(k))^{\top}$ and $R^{A}(k)=(R_{0}^{A}(k),\ldots,R_{d}^{A}(k))^{\top}$ by
\begin{align*}
R_{0}^{M}(k)&:=-\nu \int_{t_{k}}^{t_{k+1}} \{X(s)-X(t_{k})\}^{\top} \rd W(s) \\
R_{0}^{A}(k)&:=\left(\kappa-\frac{\nu^{2}}{2}\right)\int_{t_{k}}^{t_{k+1}}\left\{\frac{1}{Y_{0}(s)}-\frac{1}{Y_{0}(t_{k+1})}\right\}\rd s
-\left(\kappa-\frac{\nu^{2}}{2}+\frac{d\nu^{2}}{2}\right)\int_{t_{k}}^{t_{k+1}}\{Y_{0}(s)-Y_{0}(t_{k})\}\rd s
\end{align*}
and for any $i=1,\ldots,d$,
\begin{align*}
R_{i}^{M}(k)&:=\nu\int_{t_{k}}^{t_{k+1}}\{Y_{0}(s)-Y_{0}(t_{k})\}\rd W_{i}(s)+\sum_{p=1}^{m}\int_{t_{k}}^{t_{k+1}}(A_{p}\{X(s)-X(t_{k})\})_{i}\rd \widehat{W}_{p}(s), \\
R_{i}^{A}(k)&:=-\kappa\int_{t_{k}}^{t_{k+1}}\{X(s)-X(t_{k})\}_{i}\rd s+\int_{t_{k}}^{t_{k+1}}(A_{0}\{X(s)-X(t_{k})\})_{i}\rd s \\
&\hspace{0.5cm}+\frac{1}{2}\sum_{p=1}^{m}\int_{t_{k}}^{t_{k+1}}(A_{p}^{2}\{X(s)-X(t_{k})\})_{i}\rd s.
\end{align*}
Then we obtain the decomposition of $Y=(Y_{0},X_{1},\ldots,X_{d})^{\top}$ as follows:
\begin{align}
\label{eq:14}
\begin{split}
Y_{0}(t_{k+1})&=Y_{0}(t_{k})-\nu X(t_{k})^{\top}\Delta_{k}W+\frac{\kappa-\frac{\nu^{2}}{2}}{Y_{0}(t_{k+1})}\Delta t \\
&\hspace{0.4cm}-\left(\kappa-\frac{\nu^{2}}{2}+\frac{d\nu^{2}}{2}\right)Y_{0}(t_{k})\Delta t+R_{0}^{M}(k)+R_{0}^{A}(k)
\end{split}
\end{align}
and
\begin{align}
\label{eq:15}
\begin{split}
X_{i}(t_{k+1})&=X_{i}(t_{k})-\kappa X_{i}(t_{k})\Delta t+\nu Y_{0}(t_{k})\Delta_{k}W_{i}+(A_{0}X(t_{k}))_{i}\Delta t \\
&\hspace{0.4cm}+\sum_{p=1}^{m}(A_{p}X(t_{k}))_{i}\Delta_{k}\widehat{W}_{p}+\frac{1}{2}\sum_{p=1}^{m}(A_{p}^{2}X(t_{k}))_{i}\Delta t+R_{i}^{M}(k)+R_{i}^{A}(k).
\end{split}
\end{align}
Moreover, we define $S^{M}(k)=(S_{0}^{M}(k),\ldots,S_{d}^{M}(k))^{\top}$ and $S^{A}(k)=(S_{0}^{A}(k),\ldots,S_{d}^{A}(k))^{\top}$ by
\begin{align*}
S_{0}^{M}(k)&:=-\nu \langle X(t_{k})-X^{(n)}(t_{k}), \Delta_{k}W \rangle,\\
S_{0}^{A}(k)&:=-\left(\kappa-\frac{\nu^{2}}{2}+\frac{d\nu^{2}}{2}\right)\{Y_{0}(t_{k})-Y_{0}^{(n)}(t_{k})\}\Delta t
\end{align*}
and for any $i=1,\ldots,d$,
\begin{align*}
S_{i}^{M}(k)&:=\nu\{Y_{0}(t_{k})-Y_{0}^{(n)}(t_{k})\}\Delta_{k}W_{i}+\sum_{p=1}^{m}(A_{p}\{X(t_{k})-X^{(n)}(t_{k})\})_{i}\Delta_{k}\widehat{W}_{p}, \\
S_{i}^{A}(k)&:=-\kappa\{X(t_{k})-X^{(n)}(t_{k})\}_{i}\Delta t+(A_{0}\{X(t_{k})-X^{(n)}(t_{k})\})_{i}\Delta t \\
&\hspace{0.5cm}+\frac{1}{2}\sum_{p=1}^{m}(A_{p}^{2}\{X(t_{k})-X^{(n)}(t_{k})\})_{i}\Delta t,
\end{align*}
and we set $e(k):=Y(t_{k})-Y^{(n)}(t_{k})$ and $r(k):=R^{M}(k)+R^{A}(k)+S^{M}(k)+S^{A}(k)$.
Then by \eqref{eq:14}, \eqref{eq:15} and the definition of $Y^{(n)}=(Y_{0}^{(n)},X_{1}^{(n)},\ldots,X_{d}^{(n)})^{\top}$, we have
\begin{align*}
e_{0}(k+1)=e_{0}(k)+r_{0}(k)+\left(\kappa-\frac{\nu^{2}}{2}\right)\left\{\frac{1}{Y_{0}(t_{k+1})}-\frac{1}{Y_{0}^{(n)}(t_{k+1})}\right\}\Delta t
\end{align*}
and for any $i=1,\ldots,d$,
\begin{align*}
e_{i}(k+1)=e_{i}(k)+r_{i}(k).
\end{align*}
By the definition of $(e(k))_{k=0,1,\ldots,n}$, we obtain
\begin{align*}
&|e(k+1)|^{2}-2\left(\kappa-\frac{\nu^{2}}{2}\right)e_{0}(k+1)\left\{\frac{1}{Y_{0}(t_{k+1})}-\frac{1}{Y_{0}^{(n)}(t_{k+1})}\right\}\Delta t \\
&\leq \left\{e_{0}(k+1)-\left(\kappa-\frac{\nu^{2}}{2}\right)\left\{\frac{1}{Y_{0}(t_{k+1})}-\frac{1}{Y_{0}^{(n)}(t_{k+1})}\right\}\Delta t\right\}^{2}+\sum_{i=1}^{d}e_{i}(k+1)^{2} \\
&=|e(k)+r(k)|^{2}.
\end{align*}
Thus by the assumption $\kappa/\nu^{2} > 3$ (this implies $\kappa-\nu^{2}/2>0$), we have
\begin{align*}
|e(k+1)|^{2} &\leq |e(k)+r(k)|^{2}+2\left(\kappa-\frac{\nu^{2}}{2}\right)e_{0}(k+1)\left\{\frac{1}{Y_{0}(t_{k+1})}-\frac{1}{Y_{0}^{(n)}(t_{k+1})}\right\}\Delta t \\
&\leq |e(k)+r(k)|^{2}=|e(k)|^{2}+2\langle e(k),r(k) \rangle+|r(k)|^{2},
\end{align*}
where we used the fact that $(x-y)(1/x-1/y)=-(x-y)^{2}/xy \leq 0$, $x,y>0$ in the second inequality.
Hence we obtain
\begin{align*}
|e(k)|^{2}=\sum_{\ell=0}^{k-1}\left\{|e(\ell+1)|^{2}-|e(\ell)|^{2}\right\}
\leq \sum_{\ell=0}^{k-1}\left\{2\langle e(\ell),r(\ell) \rangle+|r(\ell)|^{2}\right\}.
\end{align*}
Here by using the inequality $\langle x,y \rangle \leq |x|^{2}/(2\delta)+\delta |y|^{2}/2$ for $\delta>0$,
\begin{align*}
\langle e(\ell),R^{A}(\ell) \rangle \leq \frac{1}{2}|R^{A}(\ell)|^{2}\frac{1}{\Delta t}+\frac{1}{2}|e(\ell)|^{2}\Delta t,
\end{align*}
and by the assumption $\kappa/\nu^{2}>3$ (this implies $\kappa-\nu^{2}/2>0$) and \eqref{eq:3},
\begin{align*}
\langle e(\ell),S^{A}(\ell) \rangle&=-\left(\kappa-\frac{\nu^{2}}{2}+\frac{d\nu^{2}}{2}\right)e_{0}(k)^{2}\Delta t-\kappa\sum_{i=1}^{d}e_{i}(\ell)^{2}\Delta t \\
&\hspace{0.4cm}+\langle A_{0}\{X(t_{\ell})-X^{(n)}(t_{\ell})\}, X(t_{\ell})-X^{(n)}(t_{\ell}) \rangle \Delta t \\
&\hspace{0.4cm}+\frac{1}{2}\sum_{p=1}^{m} \langle A_{p}^{2}\{X(t_{\ell})-X^{(n)}(t_{\ell})\}, X(t_{\ell})-X^{(n)}(t_{\ell}) \rangle \Delta t \\
&\leq 0.
\end{align*}
Therefore, it holds that
\begin{align}
\label{eq:16}
|e(k)|^{2} \leq 2\sum_{\ell=0}^{k-1}\langle e(\ell),R^{M}(\ell)+S^{M}(\ell) \rangle+\sum_{\ell=0}^{k-1}|R^{A}(\ell)|^{2}\frac{1}{\Delta t}+\frac{1}{2}\sum_{\ell=0}^{k-1}|e(\ell)|^{2}\Delta t+\sum_{\ell=0}^{k-1}|r(\ell)|^{2}.
\end{align}

Fix $\ell=0,1,\ldots,k-1$.
We first estimate the expectations of the terms $\langle e(\ell),R^{M}(\ell)+S^{M}(\ell) \rangle$ and $|R^{A}(\ell)|^{2}$ on the right hand side of \eqref{eq:16}.
Noting that $e(\ell)$ is ${\mathcal F}(t_{\ell})$-measurable, by using the martingale property of the Brownian motion $B=(W_{1},\ldots,W_{d},\widehat{W}_{1},\ldots,\widehat{W}_{m})^{\top}$ and stochastic integrals, we have
\begin{align}
\label{eq:17}
{\mathbb E}\left[\langle e(\ell),R^{M}(\ell)+S^{M}(\ell) \rangle\right]=\sum_{i=0}^{d}{\mathbb E}\left[e_{i}(\ell){\mathbb E}\left[\left.R_{i}^{M}(\ell)+S_{i}^{M}(\ell) \,\right|\, {\mathcal F}(t_{\ell})\right]\right]
=0.
\end{align}
By using Jensen's inequality, we obtain
\begin{align*}
{\mathbb E}\left[|R^{A}(\ell)|^{2}\right]
&\leq 2\left\{\left(\kappa-\frac{\nu^{2}}{2}\right)^{2}\int_{t_{\ell}}^{t_{\ell+1}}{\mathbb E}\left[\left\{\frac{1}{Y_{0}(s)}-\frac{1}{Y_{0}(t_{\ell+1})}\right\}^{2}\right]\rd s\right. \\ \notag
&\hspace{0.9cm}\left.+\left(\kappa-\frac{\nu^{2}}{2}+\frac{d\nu^{2}}{2}\right)^{2}\int_{t_{\ell}}^{t_{\ell+1}}{\mathbb E}\left[\{Y_{0}(s)-Y_{0}(t_{\ell})\}^{2}\right]\rd s\right\}\Delta t \\ \notag
&\hspace{0.4cm}+(m+2)\Biggl\{\kappa^{2}\int_{t_{\ell}}^{t_{\ell+1}}{\mathbb E}\left[|X(s)-X(t_{\ell})|^{2}\right]\rd s+\int_{t_{\ell}}^{t_{\ell+1}}{\mathbb E}\left[|A_{0}\{X(s)-X(t_{\ell})\}|^{2}\right]\rd s \\ \notag
&\hspace{2.2cm}+\frac{1}{4}\sum_{p=1}^{m}\int_{t_{\ell}}^{t_{\ell+1}}{\mathbb E}\left[|A_{p}^{2}\{X(s)-X(t_{\ell})\}|^{2}\right]\rd s\Biggl\}\Delta t.
\end{align*}
Here by using H\"older's inequality, and Proposition \ref{lem:2.2} (i) and (ii) with $q=6<2 \kappa/\nu^{2}$, for any $s \in [t_{\ell},t_{\ell+1}]$,
\begin{align}
{\mathbb E}\left[\left\{\frac{1}{Y_{0}(s)}-\frac{1}{Y_{0}(t_{\ell+1})}\right\}^{2}\right]
&\leq {\mathbb E}\left[|Y_{0}(t_{\ell+1})-Y_{0}(s)|^{6}\right]^{1/3}{\mathbb E}\left[Y_{0}(s)^{-6}\right]^{1/3}{\mathbb E}\left[Y_{0}(t_{\ell+1})^{-6}\right]^{1/3} 
\notag \\
&\leq C_{1}(6)^{2/3} C_{2}(6)^{1/3} \Delta t, \label{eq:17.1}
\end{align}
and by Cauchy--Schwarz's inequality and Proposition \ref{lem:2.2} (ii), for any $d \times d$ real matrix $A$,
\begin{align}
\label{eq:18}
{\mathbb E}\left[|A\{X(s)-X(t_{\ell})\}|^{2}\right] &\leq \|A\|^{2}{\mathbb E}\left[|X(s)-X(t_{\ell})|^{2}\right]
\leq \|A\|^{2} d C_{2}(2)\Delta t.
\end{align}
Thus we have
\begin{align}
\label{eq:19}
{\mathbb E}\left[|R^{A}(\ell)|^{2}\right] 
&\leq
%\left\{\left(\kappa-\frac{\nu^{2}}{2}\right)^{2}C_{3}(6)^{1/2}C_{2}(6)^{2/3}+\left(\kappa-\frac{\nu^{2}}{2}+\frac{d\nu^{2}}{2}\right)^{2}C_{3}(2)\right\}(\Delta t)^{3} \\ \notag
%&\hspace{0.4cm}+\frac{(m+2)dC_{3}(2)}{2}\left(\kappa^{2}+\|A_{0}\|_{2}^{2}+\frac{1}{4}\sum_{p=1}^{m}\|A_{p}^{2}\|_{2}^{2}\right)(\Delta t)^{3}
C_{1}(\Delta t)^{3}
\end{align}
for some constant $C_{1}>0$.

We next estimate the expectation of the term $|r(\ell)|^{2}$ on the right hand side of \eqref{eq:16}.
By It\^o's isometry of stochastic integrals, we obtain
\begin{align*}
{\mathbb E}\left[|R^{M}(\ell)|^{2}\right]&=\nu^{2}\int_{t_{\ell}}^{t_{\ell+1}}{\mathbb E}\left[|X(s)-X(t_{\ell})|^{2}\right]\rd s+d\nu^{2}\int_{t_{\ell}}^{t_{\ell+1}}{\mathbb E}\left[\{Y_{0}(s)-Y_{0}(t_{\ell})\}^{2}\right]\rd s \\ \notag
&\hspace{0.4cm}+\sum_{p=1}^{m}\int_{t_{\ell}}^{t_{\ell+1}}{\mathbb E}\left[|A_{p}\{X(s)-X(t_{\ell})\}|^{2}\right]\rd s.
\end{align*}
Thus by Proposition \ref{lem:2.2} (ii), we have
\begin{align}
\label{eq:20}
{\mathbb E}\left[|R^{M}(\ell)|^{2}\right]
\leq
%d\left(\nu^{2}+\frac{1}{2}\sum_{p=1}^{m}\|A_{p}\|_{2}^{2}\right)C_{3}(2)(\Delta t)^{2}
C_{2}(\Delta t)^{2}
\end{align}
for some constant $C_{2}>0$.
By using the estimate \eqref{eq:18}, we obtain
\begin{align}
\label{eq:21}
{\mathbb E}\left[|S^{M}(\ell)|^{2}\right]
&=
\nu^{2}{\mathbb E}\left[|X(t_{\ell})-X^{(n)}(t_{\ell})|^{2}\right]\Delta t
+
d\nu^{2}{\mathbb E}\left[\{Y_{0}(t_{\ell})-Y_{0}^{(n)}(t_{\ell})\}^{2}\right]\Delta t 
\notag
\\
&\hspace{0.4cm}
+\sum_{p=1}^{m}{\mathbb E}\left[|A_{p}\{X(t_{\ell})-X^{(n)}(t_{\ell})\}|^{2}\right]\Delta t \notag\\
&\leq %\max\left\{\nu^{2}+\sum_{p=1}^{m}\|A_{p}\|_{2}^{2},d\nu^{2}\right\}{\mathbb E}\left[|e(\ell)|^{2}\right]\Delta t
C_{3}{\mathbb E}\left[|e(\ell)|^{2}\right]\Delta t
\end{align}
for some constant $C_{3}>0$, and by the estimate \eqref{eq:18} again, we have
\begin{align}
\label{eq:22}
{\mathbb E}\left[|S^{A}(\ell)|^{2}\right]
&\leq
\left(\kappa-\frac{\nu^{2}}{2}+\frac{d\nu^{2}}{2}\right)^{2}
{\mathbb E}\left[\{Y_{0}(t_{\ell})-Y_{0}^{(n)}(t_{\ell})\}^{2}\right](\Delta t)^{2} \notag\\
&\hspace{0.4cm}
+
(m+2)
\Big\{
	\kappa^{2}{\mathbb E}\left[|X(t_{\ell})-X^{(n)}(t_{\ell})|^{2}\right]+{\mathbb E}\left[|A_{0}\{X(t_{\ell})-X^{(n)}(t_{\ell})\}|^{2}\right] \notag\\
	&\hspace{2.25cm}
	+\frac{1}{4}\sum_{p=1}^{m}{\mathbb E}\left[|A_{p}^{2}\{X(t_{\ell})-X^{(n)}(t_{\ell})\}|^{2}\right]
\Big\}(\Delta t)^{2} \notag\\
&\leq %\max\left\{\left(\kappa-\frac{\nu^{2}}{2}+\frac{d\nu^{2}}{2}\right)^{2},(m+2)\left(\kappa^{2}+\|A_{0}\|_{2}^{2}+\frac{1}{4}\sum_{p=1}^{m}\|A_{p}^{2}\|_{2}^{2}\right)\right\}{\mathbb E}\left[|e(\ell)|^{2}\right](\Delta t)^{2} \\ \notag
C_{4}{\mathbb E}\left[|e(\ell)|^{2}\right](\Delta t)^{2}
\end{align}
for some constant $C_{4}>0$.
By combining \eqref{eq:19}, \eqref{eq:20}, \eqref{eq:21} and \eqref{eq:22}, it holds that
\begin{align}
\label{eq:23}
{\mathbb E}\left[|r(\ell)|^{2}\right]
&\leq
4\left\{
{\mathbb E}\left[|R^{M}(\ell)|^{2}\right]
+{\mathbb E}\left[|R^{A}(\ell)|^{2}\right]
+{\mathbb E}\left[|S^{M}(\ell)|^{2}\right]
+{\mathbb E}\left[|S^{A}(\ell)|^{2}\right]
\right\} \notag \\
&\leq 4\left\{(C_{1}T+C_{2})(\Delta t)^{2}+(C_{3}+C_{4}T){\mathbb E}\left[|e(\ell)|^{2}\right]\Delta t\right\}.
\end{align}
Therefore, if follows from \eqref{eq:16}, \eqref{eq:17}, \eqref{eq:19} and \eqref{eq:23} that
\begin{align}
\label{eq:24}
{\mathbb E}\left[|e(k)|^{2}\right] 
&\leq
\sum_{\ell=0}^{k-1}
\left\{
	{\mathbb E}
	\left[|R^{A}(\ell)|^{2}\right]
	\frac{1}{\Delta t}
	+
	{\mathbb E}
	\left[|e(\ell)|^{2}\right]
	\Delta t
	+
	{\mathbb E}
	\left[|r(\ell)|^{2}\right]
\right\} \notag\\
&\leq
(C_{1}+4(C_{1}T+C_{2}))\Delta t
+
(1+4(C_{3}+C_{4}T))
\sum_{\ell=0}^{k-1}
{\mathbb E}
\left[|e(\ell)|^{2}\right]
\Delta t \notag \\
&=
C_{5}\Delta t
+
C_{6}
\sum_{\ell=0}^{k-1}
{\mathbb E}
\left[|e(\ell)|^{2}\right]
\Delta t,
\end{align}
where $C_{5}:=(C_{1}+4(C_{1}T+C_{2}))$ and $C_{6}:=(1+4(C_{3}+C_{4}T))$.
Hence by using the discrete Gronwall inequality, we obtain
\begin{align}
\label{eq:25}
\max_{k=0,1,\ldots,n}
{\mathbb E}\left[|e(k)|^{2}\right] \leq C_{5}\left(1+C_{6}e^{C_{6}}\right)\Delta t
=:C_{7}\Delta t,
\end{align}
which concludes the first statement.

%For the second statement, we can not directly use \eqref{eq:16} for estimating ${\mathbb E}[\sup_{k=0,1,\ldots,n}|e(k)|^{2}]$ since the term $\langle e(\ell),R^{M}(\ell)+S^{M}(\ell) \rangle$ on the right hand side of \eqref{eq:16} may be negative.
%To solve this problem, instead of using the estimate \eqref{eq:17}, we newly estimate the expectation of the right hand side of the following inequality using the Cauchy--Schwarz inequality:
In order to prove the second statement, we need to estimate the expectation of the random variable
\begin{align*}
\sum_{\ell=0}^{n-1}
\left|\langle e(\ell),R^{M}(\ell)+S^{M}(\ell) \rangle \right|.
\end{align*}
%It follows from Cauchy--Schwarz's inequality that
%\begin{align*}
%\left|\langle e(\ell),R^{M}(\ell)+S^{M}(\ell) \rangle\right|
%\leq |e(\ell)|\left\{|R^{M}(\ell)|+|S^{M}(\ell)|\right\}.
%\end{align*}
%Here recall ${\mathbb E}[\langle e(\ell),R^{M}(\ell)+S^{M}(\ell) \rangle]$ is vanish in the above proof of the first statement (see \eqref{eq:17}).
%Thus we can estimate ${\mathbb E}[\sup_{k=0,1,\ldots,n}|e(k)|^{2}]$ by adding the estimate of ${\mathbb E}[|e(\ell)|\{|R^{M}(\ell)|+|S^{M}(\ell)|\}]$ to the upper bound of \eqref{eq:24}.
By using Cauchy--Schwarz's inequality, \eqref{eq:20}, \eqref{eq:21} and \eqref{eq:25}, we have
\begin{align*}
{\mathbb E}\left[\left|\langle e(\ell),R^{M}(\ell)+S^{M}(\ell) \rangle\right|\right] &\leq {\mathbb E}\left[|e(\ell)|^{2}\right]^{1/2}\left\{{\mathbb E}\left[|R^{M}(\ell)|^{2}\right]^{1/2}+{\mathbb E}\left[|S^{M}(\ell)|^{2}\right]^{1/2}\right\} \\
&\leq {\mathbb E}\left[|e(\ell)|^{2}\right]^{1/2}\left\{C_{2}^{1/2}\Delta t+C_{3}^{1/2}{\mathbb E}\left[|e(\ell)|^{2}\right]^{1/2}(\Delta t)^{1/2}\right\} \\
%&\leq \left(C_{2}^{1/2}C_{7}^{1/2}+C_{3}^{1/2}C_{7}\right)(\Delta t)^{3/2}
&\leq
C_{8}(\Delta t)^{3/2}
\end{align*}
for some constant $C_{8}>0$.
Therefore, it follows from \eqref{eq:24} and \eqref{eq:25} that
\begin{align*}
{\mathbb E}\left[\max_{k=0,1,\ldots,n}|e(k)|^{2}\right] &\leq 2\sum_{\ell=0}^{n-1}{\mathbb E}\left[|e(\ell)|\left\{|R^{M}(\ell)|+|S^{M}(\ell)|\right\}\right]+C_{5}\Delta t+C_{6}\sum_{\ell=0}^{k-1}{\mathbb E}\left[|e(\ell)|^{2}\right]\Delta t \\
% &\leq (2C_{8}+C_{5}T)(\Delta t)^{1/2}+C_{6}\sum_{\ell=0}^{n-1}{\mathbb E}\left[|e(\ell)|^{2}\right]\Delta t \\
&\leq
C_{9}
\Big\{
	(\Delta t)^{1/2}
	+\sum_{\ell=0}^{n-1}{\mathbb E}\left[|e(\ell)|^{2}\right]\Delta t
\Big\}
\leq
C_{9}
\Big\{
	(\Delta t)^{1/2}
	+
	C_{7}
	\Delta t
\Big\},
\end{align*}
which concludes the second statement.
\end{proof}

\section{Numerical experiments}
\label{sec:3}
In this section, we provide some numerical results about the projection scheme \eqref{eq:8} for the polynomial diffusions \eqref{SDE_ball_0} and \eqref{eq:1}, and about the difference between two solutions of the system of the SDE \eqref{eq:4} using the projection scheme.

First, we observe behaviors of the projection scheme \eqref{eq:8} for the polynomial diffusion \eqref{SDE_ball_0} with respect to the parameter $\kappa$ through numerical experiments.
Figures \ref{fig_1}, \ref{fig_2}, \ref{fig_3}, \ref{fig_4} describe sample paths of the projection scheme $\overline{X}^{(n)}$ with $n=10000$ time steps for the polynomial diffusion $\rd X(t)=-\kappa X(t)\rd t+\nu \sqrt{1-|X(t)|^{2}} \rd W(t)$ on the time interval $[0,T]$ with $d=2$, $T=1$, $x(0)=(0.7,0.7)^{\top}$, $\nu=\sqrt{2}$ and $\kappa=2,7,50,100$.
Here $\kappa=2$ (resp. $\kappa=7$) means the smallest natural number for which the pathwise uniqueness (resp. the assumption of Corollary \ref{thm:2.3}) holds.
Moreover, the color gradient in the figures represents the flow of time.
The behaviors of $\overline{X}^{(n)}$ in the figures can be theoretically explained as follows.
By using \eqref{eq:12} and Gronwall's inequality, it holds that
%it holds that for any $t\geq 0$,
%\begin{align*}
%\e[|X(t)|^{2}]
%=
%|x(0)|^{2}
%+
%\int_{0}^{t}
%\left\{d\nu^{2}-(d\nu^{2}+2\kappa)|X(s)|^{2}\right\}\rd s.
%\end{align*}
%By using Gronwall's inequality, it holds that
\begin{align*}
\e\left[|X(t)|^{2}\right]
\leq
(|x(0)|^{2}+d\nu^{2} t)
e^{-(d\nu^{2}+2\kappa)t}
\to 0,~t \to \infty.
\end{align*}
Hence for any (small) $\varepsilon>0$, by using Markov's inequality, it holds that
\begin{align*}
\p(|X(t)|<\varepsilon)\geq 1-\varepsilon^{-2}(|x(0)|^{2}+d\nu^{2} t) e^{-(d\nu^{2}+2\kappa)t} \to 1,~t \to \infty.
\end{align*}
Therefore if $\kappa$ and $t$ are sufficiently large, then the value of $|X(t)|$ is small with high probability.
Figure \ref{fig_1}, \ref{fig_2}, \ref{fig_3}, \ref{fig_4} express this fact.

\begin{figure}[t!]
\begin{minipage}{0.5\hsize}
\includegraphics[width=80mm]
{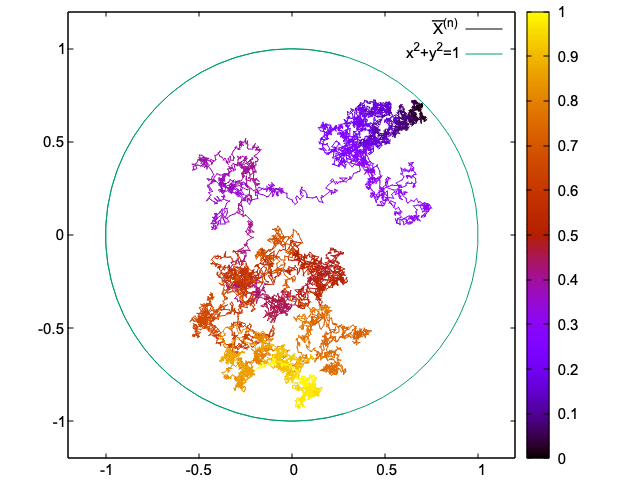}
\caption{$\kappa=2$.}
\label{fig_1}
\end{minipage}
\begin{minipage}{0.5\hsize}
\includegraphics[width=80mm]
{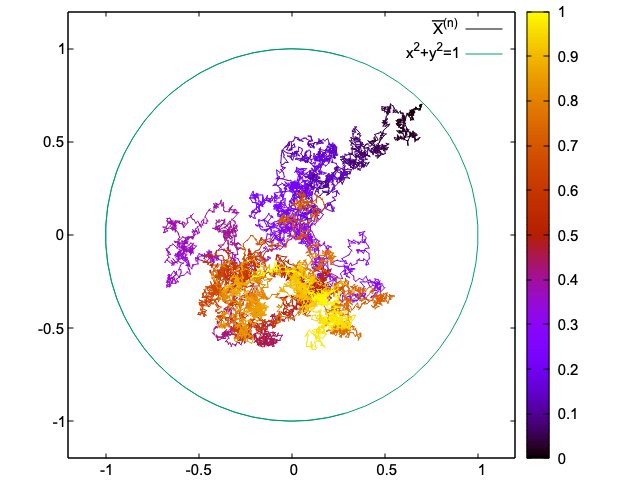}
\caption{$\kappa=7$.}
\label{fig_2}
\end{minipage}
\begin{minipage}{0.5\hsize}
\includegraphics[width=80mm]
{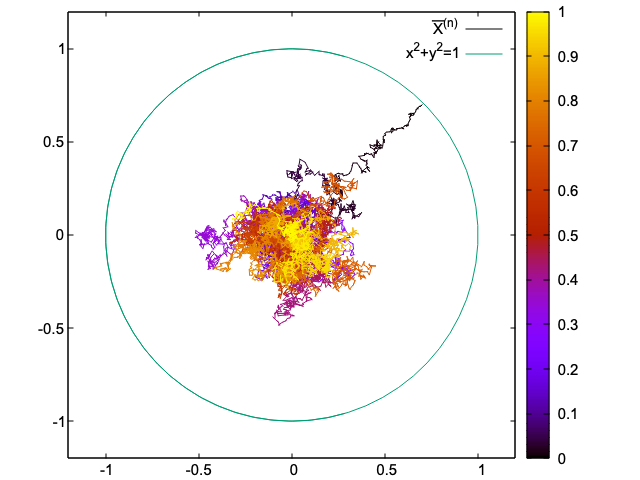}
\caption{$\kappa=50$.}
\label{fig_3}
\end{minipage}
\begin{minipage}{0.5\hsize}
\includegraphics[width=80mm]
{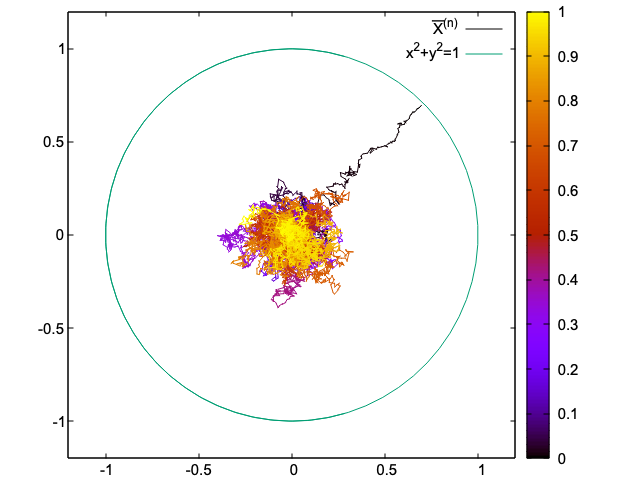}
\caption{$\kappa=100$.}
\label{fig_4}
\end{minipage}
\end{figure}

Next, we observe behaviors of the projection scheme \eqref{eq:8} for the polynomial diffusion \eqref{eq:1} with respect to real skew symmetric matrices $A_{p}$, $p=0,1,\ldots,m$ through numerical experiments.
Figures \ref{fig_5}, \ref{fig_6}, \ref{fig_7}, \ref{fig_8} describe sample paths of the projection scheme $\overline{X}^{(n)}$ with $n=10000$ time steps for the polynomial diffusion $\rd X(t)=-\kappa X(t)\rd t +\nu \sqrt{1-|X(t)|^{2}} \rd W(t)+A_{0}X(t)\rd t +A_{1}X(t) \circ \rd \widehat{W}_{1}(t)$ on the time interval $[0,T]$ with $d=2$, $m=1$, $T=1$, $x(0)=(0.7,0.7)^{\top}$, $\nu=\sqrt{2}$, $\kappa=7$ and various real skew symmetric matrices $A_{0}$ and $A_{1}$.
Note that the real symmetric matrix with a large value of the Frobenius norm has a strong effect on the direction of the corresponding vector.
For example, in Figures \ref{fig_5}, \ref{fig_6}, \ref{fig_7}, \ref{fig_8}, if the Frobenius norm of the matrix $A_{1}^{2}$ is sufficiently large, we can see the matrix has a strong effect on the direction of the vector $(1/2) A_{1}^{2} x$ (recall $A_{1}X(t) \circ \rd \widehat{W}_{1}(t)=A_{1}X(t) \rd \widehat{W}_{1}(t)+(1/2)A_{1}^{2}X(t) \rd t$).

%(recall that by the definition of the Stratonovich integral, it can be written by the It\^o integral $A_{1}X(t) \circ \rd \widehat{W}_{1}(t)=A_{1}X(t) \rd \widehat{W}_{1}(t)+(1/2)A_{1}^{2}X(t) \rd t$).

%\begin{align*}
%\int_{0}^{t}A_{1}X(s) \circ \rd \widehat{W}_{1}(s)=
%\int_{0}^{t}A_{1}X(s) \rd \widehat{W}_{1}(s)
%+
%\frac{1}{2}\int_{0}^{t}A_{1}^{2}X(s) \rd s.
%\end{align*}
%Hence the matrix $A_{1}^{2}$ has a stronger effect than $A_{0}$ and $A_{1}$
%we can see that the large norm of the express this fact.
%Hence if the norm $\| A_{1}^{2}\|$ is sufficiently large, then a trajectory of $X(t)$ is strongly influenced by the drift vector $(1/2) A_{1}^{2} x$, $x \in (0,1)^{2}$.

\begin{figure}[t!]
\begin{minipage}{0.5\hsize}
\savebox{\smlmat}{$A_{0}=\begin{pmatrix}0 & 1 \\ -1 & 0 \end{pmatrix}$,
$A_{1}=\begin{pmatrix}0 & 0 \\ 0 & 0 \end{pmatrix}$}
\includegraphics[width=80mm]
{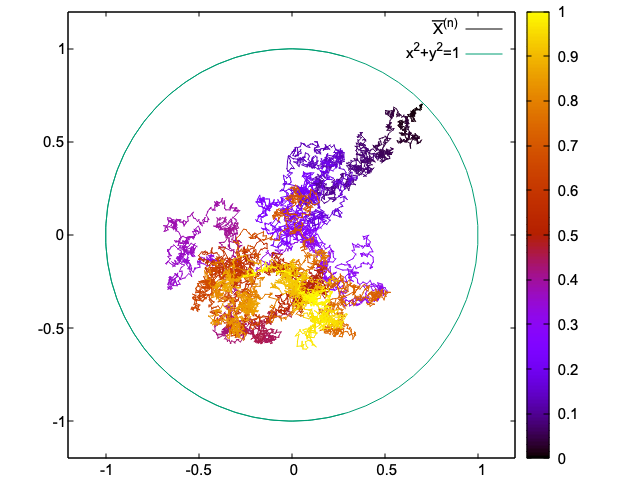}
\caption{\usebox{\smlmat}.}
\label{fig_5}
\end{minipage}
\begin{minipage}{0.5\hsize}
\savebox{\smlmat}{$A_{0}=\begin{pmatrix}0 & 10 \\ -10 & 0 \end{pmatrix}$,
$A_{1}=\begin{pmatrix}0 & 0 \\ 0 & 0 \end{pmatrix}$}
\includegraphics[width=80mm]
{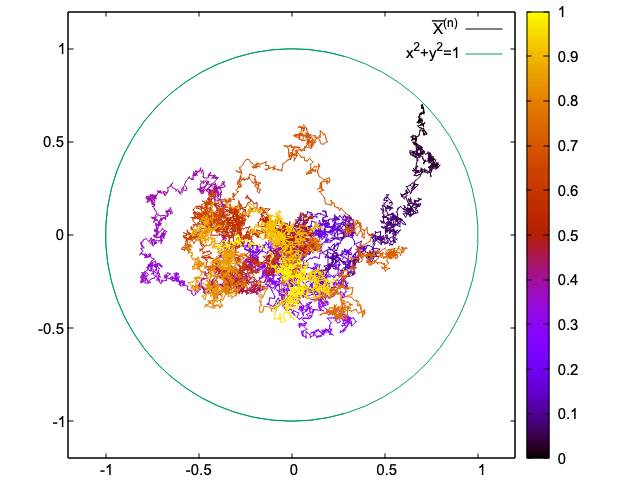}
\caption{\usebox{\smlmat}.}
\label{fig_6}
\end{minipage}
\begin{minipage}{0.5\hsize}
\savebox{\smlmat}{$A_{0}=\begin{pmatrix}0 & 1 \\ -1 & 0 \end{pmatrix}$,
$A_{1}=\begin{pmatrix}0 & 1 \\ -1 & 0 \end{pmatrix}$}
\includegraphics[width=80mm]
{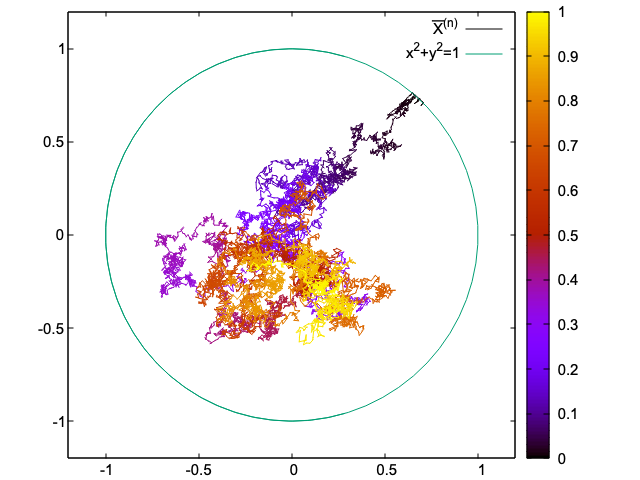}
\caption{\usebox{\smlmat}.}
\label{fig_7}
\end{minipage}
\begin{minipage}{0.5\hsize}
\savebox{\smlmat}{$A_{0}=\begin{pmatrix}0 & 10 \\ -10 & 0 \end{pmatrix}$,
$A_{1}=\begin{pmatrix}0 & 10 \\ -10 & 0 \end{pmatrix}$}
\includegraphics[width=80mm]
{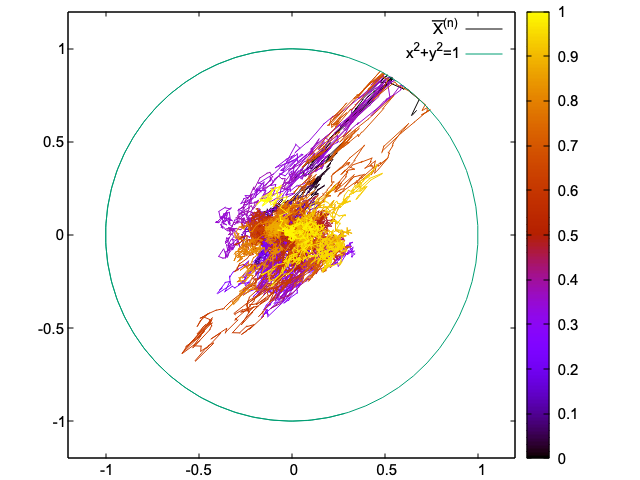}
\caption{\usebox{\smlmat}.}
\label{fig_8}
\end{minipage}
\end{figure}

Finally, we consider the difference between two solutions $Y^{1}$ and $Y^{2}$ of the system of SDE \eqref{eq:4} with different initial conditions, $\nu=\sqrt{2}$ and $A_{p}=0$, $p=0,1,\ldots,m$.
Swart \cite{Sw02} showed that the map $t \mapsto |Y^{1}(t)-Y^{2}(t)|$ is almost surely non-increasing if $\kappa \geq 1$ (see Theorem 3 in \cite{Sw02}).
Figure \ref{fig_9}, \ref{fig_10} are behaviors of the difference using the projection scheme \eqref{eq:8} for corresponding $X^{1}$ and $X^{2}$.
From these figures, we can confirm that the projection scheme expresses the theoretical result in \cite{Sw02}.

\begin{figure}[t!]
\begin{minipage}{0.5\hsize}
\includegraphics[width=80mm]
{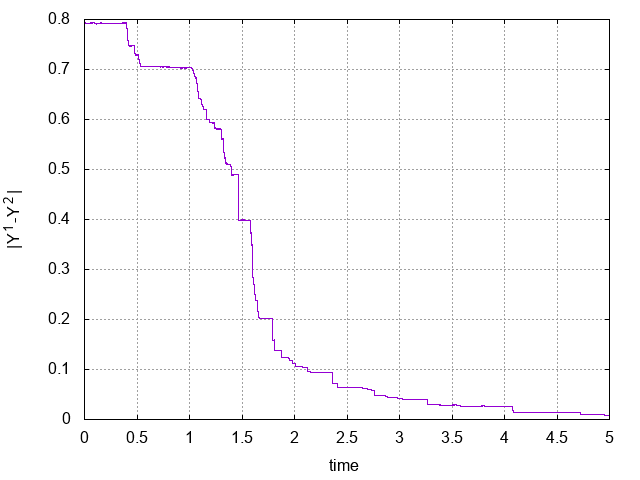}
\caption{$\kappa = 1$, the initial conditions $X^{1}(0)=(0,0)^{\top}$, $X^{2}(0)=(-0.7,0.2)^{\top}$, $n=312500$ time steps for $\overline{X}^{(n)}$.}
\label{fig_9}
\end{minipage}~
\begin{minipage}{0.5\hsize}
\includegraphics[width=80mm]
{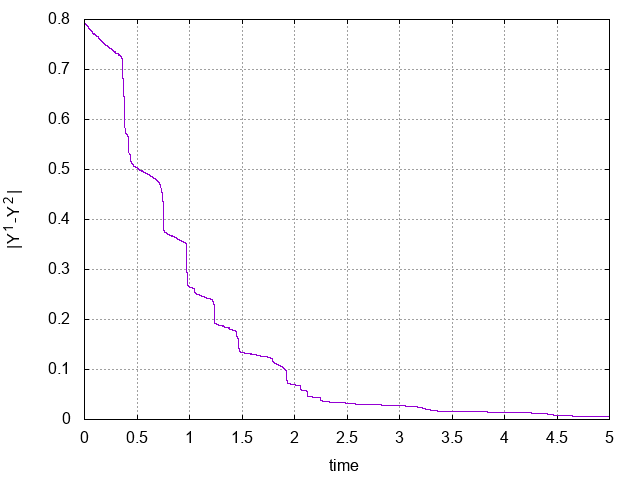}
\caption{$\kappa = 6/5$, the initial conditions $X^{1}(0)=(0,0)^{\top}$, $X^{2}(0)=(-0.7,0.2)^{\top}$, $n=112500$ time steps for $\overline{X}^{(n)}$.}
\label{fig_10}
\end{minipage}
\end{figure}

\section*{Acknowledgements}
The second author was supported by JSPS KAKENHI Grant Number 19K14552.
The third author was supported by JSPS KAKENHI Grant Numbers 17J05514 and 22K13965.

\end{document}